\newtheorem{thm}{Theorem}[section]
\newtheorem{lem}[thm]{Lemma}
\newcommand{\fold}[1]{\text{fold}(#1)}
\date{\today}
\newcommand{\kp}[1]{{k + #1}}
\newcommand{\X}{\mathcal X}
\newcommand{\A}{\mathcal A}
\newcommand{\B}{\mathcal B}
\newcommand{\N}{\mathcal N}
\newcommand{\Y}{\mathcal Y}
\newcommand{\Z}{\mathcal Z}
\newcommand{\norm}[1]{\left\|#1\right\|}
\newcommand{\normF}[1]{\norm{#1}_F}
\newcommand{\expect}{\mathbb E}
\newcommand{\FR}{\Omega}
\newcommand{\projF}[1]{P_{\FR}(#1)}
\newcommand{\proj}[1]{P_{#1}}
\newcommand{\Set}[1]{\left\{#1\right\}}
\newcommand{\compl}{^\mathsf{c}}
\newcommand{\unfold}[1]{\text{unfold}(#1)}
\newcommand{\bdiag}{\textrm{bdiag}}
\newcommand{\bcirc}[1]{\text{bcirc}(#1)}
\newcommand{\dotP}[2]{\left\langle #1, #2 \right\rangle}
\newcommand{\fft}[1]{F(#1)}
\begin{document}
\title[Block Matrix and Tensor Randomized Kaczmarz Methods for Linear Feasibility Problems]
{Block Matrix and Tensor Randomized Kaczmarz Methods for Linear Feasibility Problems}

\author*[1]{\fnm{Minxin} \sur{Zhang}}\email{minxinzhang@math.ucla.edu}
\author[2]{\fnm{Jamie} \sur{Haddock}}\email{jhaddock@g.hmc.edu}
\author[1]{\fnm{Deanna} \sur{Needell}}\email{deanna@math.ucla.edu}

\affil*[1]{\orgdiv{Department of Mathematics}, \orgname{University of California, Los Angeles}, \orgaddress{\city{Los Angeles}, \state{CA}, \country{USA}}}

\affil[2]{\orgdiv{Department of Mathematics}, \orgname{Harvey Mudd College}, \orgaddress{\city{Claremont}, \state{CA}, \country{USA}}}

\abstract{
The randomized Kaczmarz methods are a popular and effective family of iterative methods for solving large-scale linear systems of equations, which have also been applied to linear feasibility problems. In this work, we propose a new block variant of the randomized Kaczmarz method, B-MRK, for solving 
linear feasibility problems defined by matrices. We show that B-MRK converges linearly in expectation to the feasible region.
Furthermore, we extend the method to solve tensor linear feasibility problems defined under the tensor t-product. A tensor randomized Kaczmarz (TRK) method, TRK-L, is proposed for solving linear feasibility problems that involve mixed equality and inequality constraints. Additionally, we introduce another TRK method, TRK-LB, specifically tailored for cases where the feasible region is defined by linear equality constraints coupled with bound constraints on the variables. We show that both of the TRK methods converge linearly in expectation to the feasible region. Moreover, the effectiveness of our methods is demonstrated through numerical experiments on various Gaussian random data and applications in image deblurring.
}
\maketitle
\section{Introduction}
In many real-world scenarios, data naturally occurs as third-order tensors, such as in color images represented by three modes (height, width, color channels), video data organized by frames, rows, and columns, and biomedical signals characterized by time, frequency, and channel.  For data in this form, the tensor t-product is a natural operator that recovers the usual matrix product as a special case, and extends many of the ``nice" properties of this product to the tensor regime.  First introduced in \cite{KILMER2011641}, the tensor t-product has found many applications, such as signal processing~\cite{zhang2016exact,semerci2014tensor}, image processing and encryption~\cite{khalil2021efficient}, image compression and deblurring~\cite{newman2020nonnegative}, machine learning~\cite{settles2007multiple}, computer vision~\cite{zhang2014novel,martin2013order}, and the multi-view clustering problem~\cite{cheng2018tensor}.
These ideas even appear in quantum information processing~\cite{lemm2018multivariate} and are used to build neural network architectures~\cite{newman2018stable}. Recently, there has been significant work developing iterative methods for solving linear systems and linear regression problems defined under the t-product; see e.g,~\cite{kilmer2013third,ma2022randomized,huang2023tensor}.

\subsection{Randomized Kaczmarz for linear equations}
The randomized Kaczmarz (RK) methods are a commonly used family of iterative methods for solving large linear systems of the form $Ax=b,$ where $A$ is an $m\times n$ matrix and $b$ is an $m$-dimensional vector. These methods iterate by randomly sampling one of the rows of the matrix $A$, and projecting the current iterate, $x^{k}$, onto the hyperplane solution set of the equation defined by the sampled row.  The standard RK update is given by
\[
x^\kp1 = x^k - a_{i_k}\frac{a_{i_k}^T x^k-b_{i_k}}{\norm{a_{i_k}}^2},
\]
where $i_k$ is the row index sampled at iteration $k$ and $a_{i_k}^T$ is the $i_k$-th row of $A$. For a consistent linear system,
Strohmer and Vershynin \cite{strohmer2006randomized, Strohmer2007ARK} established that, when selecting the row indices $i$ in each iteration independently at random with probabilities 
proportional to the squared norms $\norm{a_{i}}^2$, the RK method converges at least linearly in expectation to the solution $x^*$ of the system, with the guarantee
\[
\expect[\norm{x^\kp1-x^*}^2 | x^k]\le (1-\kappa(A)^{-2})\norm{x^k-x^*}^2,
\]
where $\kappa(A):=\normF{A}\norm{A^{-1}}_2$ is the \emph{scaled condition number}~\cite{demmel1988probability}, with 
$\normF{\cdot}$ being the Frobenius norm and $A^{-1}$ the left inverse of $A$.

Due to a block-iterative scheme introduced in \cite{elfving1980block}, a block variant of 
the RK methods was first developed and analyzed by Needell and Tropp \cite{needell2014paved}. 
The method first creates a partition $T:=\Set{\tau_1,\cdots,\tau_{m'}}$ of the row indices. For $\tau\in T$, let 
$A_\tau$ and $b_\tau$ be the submatrix of $A$ and the subvector of $b$ indexed by $\tau$ respectively. 
The partition $T$ is called an 
\emph{$(m', \alpha, \beta)$ row paving} of the matrix $A$ if
\[
\lambda_{\min} (A_\tau A_\tau^*)\ge \alpha \quad\textrm{and}\quad \lambda_{\max}(A_\tau A_\tau^*)\le \beta\quad\textrm{for each } \tau\in T,
\]
where $\lambda_{\min}$ and $\lambda_{\max}$ denote the minimum and maximum eigenvalues respectively.
In each iteration, a random block $\tau\in T$ is selected and the current iterate $x^k$ is projected onto the solution set 
$\Set{x:A_\tau x=b_\tau}$ using the update rule
\begin{equation}\label{blockRKOG}
x^\kp1 = x^k-(A_\tau)^\dag(A_\tau x^k-b_\tau),
\end{equation}
where $(A_\tau)^\dag$ is the (Moore-Penrose) pseudoinverse of $A_\tau.$ It was shown that the method satisfies
\[
\expect{\norm{x^k-x^*}^2}\le\left[1-\frac{\sigma^2_{\min}(A)}{\beta m'}\right]^k\norm{x^0-x^*}^2+\frac{\beta}{\alpha}\frac{\norm{Ax^*-b}^2}{\sigma^2_{\min}(A)},
\]
where $x^*$ is the least-squares solution of the linear system, and $\sigma_{\min}$ denotes the minimum singular value.
Hence, for consistent linear systems, the block RK method converges linearly in expectation, with the rate of convergence dependent on
the row paving.  In~\cite{HJY21}, the authors extended the convergence analysis for block RK to a broader class of sets of blocks which include row pavings as a special case. 

The RK methods have also been extended to the tensor setting under t-product multiplication \cite{ma2022randomized,chen2021regularized, bao2022randomized, zhangsampling, liao2024accelerated}.
An equivalence between a tensor RK method and a block variant of RK methods for matrices was established in the Fourier domain
in \cite{ma2022randomized}.

\subsection{Randomized Kaczmarz for linear equality and inequality constraints}
As a generalization of the RK methods for linear equations, Leventhal and Lewis~\cite{leventhal2010randomized} proposed a randomized algorithm for solving linear feasibility problems of the form
\begin{equation}\label{eq:matrixIeq}
\begin{cases}
a_i^Tx\le b_i & (i \in I_\le) \\
a_i^Tx= b_i & (i \in I_=),
\end{cases}
\end{equation}
where $a_i\in \mathbb R^n$ for each i, and the disjoint index sets $I_\le$ and $I_=$ partition the set $\Set{1,2,\cdots,m}$.
At each iteration $k,$ the algorithm randomly samples an index $i_k\in I_\le\cup I_=$ and, if $i_k\in I_=,$ or if $i_k\in I_\le$ and 
$a_{i_k}^Tx> b_{i_k}$, projects the current iterate
$x^k$ onto the hyperplane defined by $\Set{x:a_{i_k}^Tx= b_{i_k} }$.
It was shown that the algorithm converges at least linearly in expectation, with the guarantee
\[
\expect[d(x^\kp1,\FR)^2 | x^k]\le \left(1-\frac{1}{L^2\normF{A}^2}\right)d(x^k,\FR)^2 ,
\]
where $\FR$ is the feasible region defined by (\ref{eq:matrixIeq}), $d(x,\FR) := \min_{y \in \FR} \|x - y\|$ denotes the distance of a point $x$ to $\FR$,
and $L$ is the Hoffman constant \cite{hoffman1952}. The Hoffman constant is defined as the smallest constant $L$ such that, for all $x\in\mathbb R^n,$
\begin{equation}\label{eq:hoffman0}
d(x,\FR)\le L\norm{c(Ax-b)},
\end{equation}
where the function $c:\mathbb R^m\to\mathbb R^m$ is defined by
\begin{equation}\label{cfunc}
c(y)_i = \begin{cases}
\max\{y_i,0\} & (i \in I_\le) \\
y_i & (i \in I_=).
\end{cases}
\end{equation}
Several authors have provided geometric or algebraic interpretations of the Hoffman constant, e.g.,
\cite{guler1995approximations, zheng2004hoffman, pang1997error}; or have provided algorithmic approaches to compute or bound the constant~\cite{pena2024easily}.
In the case of linear equations (i.e., $I_\le=\emptyset$), by singular value decomposition, $L$ is simply the reciprocal of the
smallest nonzero singular value of $A$, and hence equals the norm of the left inverse, $\norm{A^{-1}}_2$, when $A$ has full rank.

A block variant of the RK method for solving (\ref{eq:matrixIeq}) was proposed in \cite{briskman2015block}. The method first creates a partition 
$T = \Set{\tau_1,\cdots\tau_{m'}}$ of row indices in $I_\le$ and a partition $T'=\Set{\tau_{1+m'},\cdots\tau_{m''}}$ of row indices in $I_=$. In each iteration, a random block $\tau\in T\cup T'$ is selected and the current iterate  is updated by
\[
x^\kp1 = \begin{cases}
x^k-(A_\tau)^\dag(A_\tau x^k-b_\tau)_+ & (\textrm{if }\tau\in T)\\
x^k-(A_\tau)^\dag(A_\tau x^k-b_\tau) & (\textrm{if }\tau\in T'),
\end{cases}
\]
where $(A_\tau x^k-b_\tau)_+=\max\{A_\tau x^k-b_\tau,0\},$ with the maximum operation applied entrywise.
It was shown that the method converges linearly in expectation if $T$ is an \emph{obtuse row paving}, which is a special
row paving that satisfies a certain geometry property. In general, obtuse row pavings are challenging to obtain.

\subsection{Contributions}
In this work, we propose a new block RK method, B-MRK, for solving feasibility linear systems of the form
\[
\begin{cases}
a_i^T X\le b_i^T & (i \in I_\le) \\
a_i^T X= b_i^T & (i \in I_=),
\end{cases}
\]
where $a_i^T$ represents the $i$-th row of a matrix $A\in \mathbb R^{m\times n}$, 
$b_i^T$ represents the $i$-th row of a matrix $B \in\mathbb R^{m\times p}$, $X\in \mathbb R^{n\times p}$,
and the disjoint index sets $I_\le$ and $I_=$ partition the set $\Set{1,2,\cdots,m}$. 
The method does not require the calculation of the pseudoinverse of each block matrix $A_\tau,$
making it computationally cheaper and numerically more stable than the block method described in \eqref{blockRKOG}. Moreover, we show it converges linearly in expectation without reliance on any special row paving.

Furthermore, we extend the method to solve linear feasibility problems defined
under the tensor t-product~\cite{KILMER2011641}. For third-order tensors $\A\in\mathbb R^{m\times l\times n}$, $\X\in\mathbb R^{l\times p\times n}$ and $\B\in\mathbb R^{m\times p\times n}$, we consider the following 
linear equality and inequality constraints:
\begin{equation}\label{eq:lc}
\begin{cases}
\mathcal A_{i::}\ast\mathcal X \le \mathcal B_{i::} & (i \in I_\le) \\
\mathcal A_{i::}\ast\mathcal X = \mathcal B_{i::} & (i \in I_=),
\end{cases}
\end{equation}
where the symbol  “$\ast$” denotes the tensor t-product (defined below in \eqref{def:tprod}
), $\mathcal A_{i::}:=\mathcal A(i,:,:)$ represents the $i$-th row slice of $\A$,
$\mathcal B_{i::}:=\mathcal B(i,:,:)$ represents the $i$-th row slice of $\B$,
and the disjoint index sets $I_\le$ and $I_=$ partition the set $I=\{1,2,...,m\}$. 
We propose a tensor RK method, TRK-L, for solving linear feasibility problems of the above form. 
TRK-L iterates by independently sampling a random index $i_k$ and then projecting the current iterate $\X_k$ towards the solution set
given by $\Set{\X:\mathcal A_{i_k::}\ast\mathcal X \le \mathcal B_{i_k::} }$ if $i_k\in I_\le$ or 
$\Set{\X:\mathcal A_{i_k::}\ast\mathcal X = \mathcal B_{i_k::} }$ if $i_k\in I_=$.
A bounded step size $t_{i_k}$ is incorporated at each iteration $k$ to ensure convergence.
To bound convergence, the notion of a Hoffman constant is extended to the tensor setting. 
We show that TRK-L converges linearly in expectation to the feasible region, with the rate of convergence 
given in terms of the Hoffman constant.
Additionally, we introduce another TRK method, TRK-LB, specifically tailored for cases where the feasible region is defined by linear equality constraints coupled with bound constraints on the variables:
\begin{equation}\label{eq:lb}
\A\ast\X = \B, ~\X\le \tilde\B,
\end{equation}
where $\A\in\mathbb R^{m\times l\times n}$, $\X\in\mathbb R^{l\times p\times n}$, $\B\in\mathbb R^{m\times p\times n}$
and $\tilde\B\in\mathbb R^{l\times p\times n}$.  Nonnegativity constraints are a special case of this type of bound constraint.
At each iteration of TRK-LB, a random index $i_k$ is selected and the current iterate $\X_k$ is projected towards
$\Set{\X:\mathcal A_{i_k::}\ast\mathcal X = \mathcal B_{i_k::} }$ with respect to a step size $t_{i_k}$, followed by the orthogonal
projection onto the polyhedral set $\Set{\X\le \tilde\B}.$ Note that the orthogonal projection of a point $\X$ onto  $\Set{\X\le \tilde\B}$
can be easily computed as $\min\{\X,\tilde\B\}$, with the minimum operation applied to $\X$ entrywise.
We show that TRK-LB also converges linearly in expectation to the feasible region. 

We demonstrate the effectiveness of B-MRK, TRK-L and TRK-LB through numerical experiments
on various Gaussian random data, as well as applications in image deblurring, where non-negativity constraints are 
naturally applied.

\subsection{Organization}
The rest of the paper is organized as follows. In Section~\ref{sec:preliminaries}, we present basic concepts and results of third-order tensors. In Section~\ref{sec:b-mrk}, we formulate the block RK method, B-MRK, for solving linear feasibility problems defined by matrices, and show its linear convergence in expectation.
In Section~\ref{sec:trkL}, we
propose the tensor RK method, TRK-L, for solving linear feasibility problems of the general form 
(\ref{eq:lc}), and show its linear convergence in expectation. In Section~\ref{sec:comp}, we make analytical connections 
between the TRK-L method and the B-MRK method.
In Section~\ref{sec:trklb}, we
introduce the TRK-LB method for solving problems of the special form 
(\ref{eq:lb}) and also show its linear convergence in expectation. 
We showcase numerical experiments in Section~\ref{sec:exp} and conclude in Section~\ref{sec:conclude}.

\section{Preliminaries}\label{sec:preliminaries}
Throughout this work, we use calligraphic capital letters to represent tensors and regular capital letters to represent matrices.
For a third-order tensor $\A\in\mathbb R^{m\times l\times n}$, the \emph{block circulant operator} is defined as follows:
\[
\bcirc{\A} := \begin{bmatrix}
    A_1 & A_n & \cdots & A_2 \\
    A_2 & A_1 & \cdots & A_3 \\
    \vdots & \vdots & \ddots & \vdots \\
    A_n & A_{n-1} & \cdots & A_1
\end{bmatrix}
\]
where $A_k := \A(;,:,k)$ is the $k$-th \emph{frontal slice} of $\A$.
For the conversion between tensors and matrices, we define the operator $\unfold{\cdot}$ and its inversion $\fold{\cdot}$:
\[
\text{unfold}(\A) = 
\begin{bmatrix}
    A_1 \\
    A_2 \\
    \vdots \\
    A_n
\end{bmatrix}
\in \mathbb{R}^{mn\times l}, \quad 
\text{fold} \left( 
\begin{bmatrix}
    A_1 \\
    A_2 \\
    \vdots \\
    A_n
\end{bmatrix}
\right) = \A.
\]
The transpose of $\A,$ denoted by $\A^T,$ is the $l\times m\times n$ tensor obtained by transposing each of the frontal slices
of $\A$ and then reverse the order of transposed frontal slices $2$ through $n.$ Thus,
\[
\bcirc{\A^T} = \bcirc{\A}^T.
\]
The Frobenius norm of $\A$ is defined as
\[
\normF{\A} :=  \sqrt{\sum_{i,j,k}\A(i,j,k)^2}
\]
and we define the distance in terms of this norm, $d(\X, \FR) := \min_{\Y \in \FR} \|\X - \Y\|_F$.

For two tensors $\A$ and $\tilde\A$ of the same size, we define the inner product as
\[
\langle\A,\tilde\A\rangle = \sum_{i,j,k} \A(i,j,k)\tilde\A(i,j,k).
\]
Hence,
$\normF{\A}^2 = \langle\A, \A\rangle $.
If $\A\in\mathbb R^{m\times l\times n}$ and $\X\in\mathbb R^{l\times p\times n}$,
their \emph{tensor t-product} is defined as the $m\times p\times n$ given by
\begin{equation}\label{def:tprod}
\A\ast\X:= \fold{\bcirc{A}\unfold{\X}}.
\end{equation}
The t-product is separable in the first dimension:
\[
\A_{i::}\ast\X = (\A\ast\X)(i,:,:),
\]
where $\A_{i::}:=\A(i,:,:)$ denotes the $i$-th row slice of $\A$ for each index $i$.
It also holds that
\[
\A\ast\X = \sum_{j=1}^l \A(:,j,:)\X(j,:,:).
\]
Moreover, for a tensor $\B\in\mathbb R^{m\times p\times n} $, by \cite[Lemma 2.7]{chen2021regularized} we have 
\[
\langle\A*\X,\B\rangle=\langle\X,\A^T*\B\rangle.
\]

With the block circulant matrix involved, the t-product can be efficiently implemented using the discrete Fourier transform (DFT).
Given a tensor $\A\in\mathbb R^{m\times l\times n}$, the DFT of $\A$ is defined as a tensor $\fft{\A}$ that is obtained by taking the DFT along each tube fiber $\A(i,j,:)$ of $\A$, i.e., for each $i\in\Set{1,\cdots,m}$ and $j\in\Set{1,\cdots,l}$,
\begin{equation}\label{eq:fft}
\fft{\A}(i,j,:) = \sqrt{n}F_n\A(i,j,:),
\end{equation}
where $F_n$ denotes the $n\times n$ unitary DFT matrix. 
It follows immediately that 
\begin{equation}\label{eq:Fnorm}
\normF{\fft{\A}}^2 = n\normF{\A}^2.
\end{equation}
For each $k$, we define $\fft{\A}_k$ as the $k$-th frontal slice of $\fft{\A}$, i.e. $$\fft{\A}_k := \fft{\A}(:,:,k).$$
Block circulant matrices can be block diagonalized by DFT as follows:
\begin{equation}\label{eq:bdiag}
\bdiag(\fft{\A}) := (F_n\otimes I_m)\bcirc{\A}(F_n^*\otimes I_l)=\left(\begin{array}{cccc}
\fft{\A}_{1} & & &\\
 & \fft{\A}_{2} & &\\
& & \ddots & \\
& & & \fft{\A}_{n}
\end{array}\right),
\end{equation}
where $\otimes$ is the Kronecker product.

We let $(\A)_+ = \max(\A,0)$ with the maximum operation applied to $\A$ entry-wise.
We also define a function $c_T:\mathbb R^{l\times p\times n}\to \mathbb R^{l\times p\times n}$, analogous to \eqref{cfunc}, by
\begin{equation}\label{eq:cTfunc}
c_T(\Z)(i,j,k) = 
\begin{cases}
\max\left\{\Z(i,j,k) ,0\right\} & (i \in I_\le)\\
\Z(i,j,k) & (i \in I_=).
\end{cases}
\end{equation}

\section{A Block MRK Method for Linear Constraints}\label{sec:b-mrk}
In this section, we propose a block RK method, B-MRK, for linear feasibility problems defined by matrices.
Let $A\in \mathbb{R}^{m\times n}$, $X \in \mathbb{R}^{n\times p}$ and $B \in \mathbb{R}^{m \times p}$. We consider problems of the following form:
\begin{equation}\label{eq:mLC}
\begin{cases}
a_i^T X\le b_i^T & (i \in I_\le) \\
a_i^T X= b_i^T & (i \in I_=),
\end{cases}
\end{equation}
where $a_i^T$ represents the $i$-th row of $A$, $b_i^T$ represents the $i$-th row of $B$, and the disjoint index sets $I_\le$ and $I_=$ partition the set $\Set{1,2,\cdots,m}$. 

\begin{algorithm}[!htp]
\caption{B-MRK for matrix linear constraints}\label{alg:block_mrk}
\begin{algorithmic}[1]
\State \textbf{Input:} \begin{itemize}
\item $X^0 \in \mathbb{R}^{n\times p}$, $A\in \mathbb{R}^{m\times n}$, $B \in \mathbb{R}^{m \times p}$, indices partition $I_\le$ and $I_=$ of $\Set{1,\cdots,m}$
\item partition $T:=\Set{\tau_1,\cdots\tau_{m'}}$ of $I_\le$ and partition $T':=\Set{\tau_{1+m'},\cdots\tau_{m''}}$ of $I_=$
\item probabilities $p_{\tau_i} = \|A_{\tau_i}\|_F^2/\|A\|_F^2$ for $i\in\Set{1,\cdots,m''}$
\item step sizes $t_{\tau_i}<2$
\end{itemize}
\For{$k = 0,1,2,\dots$}
    \State Sample $\tau\sim \Set{p_{\tau_i}}$
    \If{$\tau\in T$}
    \State $X^{k+1} = X^k - t_{\tau}A_{\tau}^T\left( A_{\tau}X^k - B_{\tau} \right)_+/ \|A_{\tau}\|_F^2$
    \Else
    \State $X^{k+1} = X^k - t_{\tau}A_{\tau}^T\left( A_{\tau}X^k - B_{\tau} \right)/ \|A_{\tau}\|_F^2$
    \EndIf
\EndFor
\State \textbf{Output:} last iterate $X^{k}$
\end{algorithmic}
\end{algorithm}

Details of the B-MRK method are summarized in Algorithm~\ref{alg:block_mrk}. The method starts with
a partition $T:=\Set{\tau_1,\cdots\tau_{m'}}$ of the index set $I_\le$ and a partition $T':=\Set{\tau_{1+m'},\cdots\tau_{m''}}$ of the index set $I_=$. In each iteration, a block 
$\tau\in T\cup T'$ is independently selected at random with probabilities 
proportional to the squared Frobenius norms $\normF{A_\tau}^2$. If $\tau\in T$, the current iterate
$X^k$ is projected towards the set $\{X: A_\tau X \le B_\tau\}$ in the coordinates where $X^k$ violates the entrywise inequalities, that is, the support of $\left( A_\tau X^k - B_\tau \right)_+$. Otherwise, $X^k$ is projected towards the set $\{X: A_\tau X = B_\tau\}$.
A step size $t_\tau$ is used. Below, we show that the
B-MRK method converges linearly in expectation if choosing $t_\tau<2$ for all $\tau\in T\cup T'$.

The Hoffman constant $L$ in (\ref{eq:hoffman0}) is defined for vectors $x\in\mathbb R^n$. The following lemma extends this definition to matrices $X\in\mathbb R^{n\times p}$.
\begin{lem}\label{lem:hoffmanM}
Suppose the system (\ref{eq:mLC}) has a nonempty feasible region $\FR$. Then there exists a smallest constant $L>0$ such that, for all $X\in\mathbb R^{n\times p},$
\begin{equation}\label{eq:hoffmanM}
d(X,\FR)\le L\normF{ c_M(AX-B)},
\end{equation}
where the function $ c_M:\mathbb R^{m\times p}\to\mathbb R^{m\times p}$ is defined by
\begin{equation}\label{eq:cMfunc}
 c_M(Y)_{ij} = \begin{cases}
\max\{Y_{ij},0\} & (i \in I_\le) \\
Y_{ij} & (i \in I_=).
\end{cases}
\end{equation}
We refer to $L$ as the Hoffman constant in the tensor setting.
\end{lem}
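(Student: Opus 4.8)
The plan is to reduce the matrix feasibility problem (\ref{eq:mLC}) to the scalar feasibility problem (\ref{eq:matrixIeq}) and then invoke the classical Hoffman bound (\ref{eq:hoffman0}). The key observation is that the constraints in (\ref{eq:mLC}) act columnwise: for each column index $j$, writing $x^{(j)}$ and $b^{(j)}$ for the $j$-th columns of $X$ and $B$, the row constraint $a_i^T X \le b_i^T$ (resp.\ $a_i^T X = b_i^T$) is equivalent to the $p$ scalar constraints $a_i^T x^{(j)} \le b^{(j)}_i$ (resp.\ $a_i^T x^{(j)} = b^{(j)}_i$) for $j = 1, \dots, p$. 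Hence $X \in \FR$ if and only if each column $x^{(j)}$ lies in the vector feasible region $\FR_j := \{x : a_i^T x \le b^{(j)}_i \ (i \in I_\le),\ a_i^T x = b^{(j)}_i \ (i \in I_=)\}$, so $\FR$ is the Cartesian product $\FR_1 \times \cdots \times \FR_p$. Since $\FR$ is nonempty, each $\FR_j$ is nonempty (and closed, being a polyhedron).

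The first substantive step is to verify that both sides of (\ref{eq:hoffmanM}) decompose across columns. Because $\FR$ is a product set and $\normF{X-Y}^2 = \sum_j \norm{x^{(j)}-y^{(j)}}^2$, the minimization defining the distance separates, giving $d(X,\FR)^2 = \sum_{j=1}^p d(x^{(j)}, \FR_j)^2$. Likewise, the definition (\ref{eq:cMfunc}) of $c_M$ depends on the row index $i$ only, so the $j$-th column of $c_M(AX-B)$ equals $c\!\left(A x^{(j)} - b^{(j)}\right)$ with $c$ the scalar map (\ref{cfunc}); consequently $\normF{c_M(AX-B)}^2 = \sum_{j=1}^p \norm{c(A x^{(j)} - b^{(j)})}^2$. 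Equivalently, one may stack the columns into $\mathrm{vec}(X) \in \mathbb{R}^{np}$ and regard (\ref{eq:mLC}) as a single scalar feasibility problem with coefficient matrix $I_p \otimes A$ and right-hand side $\mathrm{vec}(B)$; the two identities above are precisely the statements that vectorization is an isometry carrying $\FR$ to the vectorized feasible region and $c_M$ to $c$.

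With this reduction in hand, the lemma follows by applying the scalar bound. Since the classical Hoffman constant depends only on the coefficient matrix and the partition $(I_\le,I_=)$, applying (\ref{eq:hoffman0}) once to the vectorized system $\left(I_p\otimes A,\ \mathrm{vec}(B)\right)$ — whose feasible region is nonempty because $\FR$ is — produces a constant $\tilde L$ with $d(\mathrm{vec}(X),\widetilde\FR) \le \tilde L\,\norm{c(\,(I_p\otimes A)\mathrm{vec}(X)-\mathrm{vec}(B)\,)}$ for all $X$, which by the isometry is exactly (\ref{eq:hoffmanM}) with $L=\tilde L$. The existence of a \emph{smallest} such constant is then inherited directly from the scalar case: it equals the least Hoffman constant of the vectorized system, or concretely $L := \sup d(X,\FR)/\normF{c_M(AX-B)}$ with the supremum over all $X \notin \FR$ (where the denominator is strictly positive, since a violated constraint forces $c_M(AX-B)\neq 0$), a supremum that is finite by the bound just established and is by construction the least constant making (\ref{eq:hoffmanM}) hold for every $X$.

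I expect the only genuine subtlety to be the uniformity of the scalar constant across the distinct column right-hand sides $b^{(j)}$: the quoted statement (\ref{eq:hoffman0}) fixes a single system, whereas here the columns of $B$ may differ, so a naive columnwise application would need a Hoffman constant valid uniformly over all feasible right-hand sides. The vectorization route sidesteps this entirely by applying (\ref{eq:hoffman0}) a single time to the enlarged system rather than once per column; everything else is routine bookkeeping confirming that the distance and the residual decompose compatibly under the stacking isometry.
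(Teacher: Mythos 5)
Your proposal is correct and follows essentially the same route as the paper: both vectorize $X$ and $B$, recast (\ref{eq:mLC}) as a scalar feasibility system with the block-diagonal coefficient matrix $I_p\otimes A$, and apply the classical Hoffman bound (\ref{eq:hoffman0}) once to that enlarged system, using the isometry of vectorization to transfer the distance and residual. Your extra remarks on the columnwise product structure, the existence of the smallest constant, and the uniformity subtlety are careful additions but do not change the underlying argument.
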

\begin{proof}
The matrix $X$ can be converted to a vector $x\in\mathbb{R}^{np}$ such that $X_{kj} = x_{(j-1)n+k}$ for $1\le k\le n$ and $1\le j\le p.$
Similarly, the matrix $B$ can be vectorized to $b\in\mathbb R^{mp}$ such that $B_{ij} = b_{(j-i)n+i}$ for $1\le i\le m$ and $1\le j\le p.$
The system (\ref{eq:mLC}) can be equivalently written as
\begin{equation}\label{eq:vFR}
\begin{cases}
a_i^T x_{((j-1)n+1):(jn)}\le b_{(j-i)n+i} & (i \in I_\le, j\in\{1,\cdots,p\}) \\
a_i^T x_{((j-1)n+1):(jn)} = b_{(j-i)n+i} & (i \in I_=, j\in\{1,\cdots,p\}).
\end{cases}
\end{equation}
Hence, there exists the Hoffman constant $L>0$ such that, for all $x\in\mathbb{R}^{np}$,
\[
d(x,\tilde\FR)\le L\norm{c(\tilde Ax-b)},
\]
where $\tilde\FR\subset\mathbb R^{np}$ is the feasible region defined by (\ref{eq:vFR}), $c$ is the function defined by (\ref{cfunc}), 
and $\tilde A:=\textrm{diag}\{A,\cdots,A\}\in\mathbb R^{mp\times np}.$
As $x$ is the vectorization of $X$ and (\ref{eq:vFR}) is equivalent to (\ref{eq:mLC}), $d(x,\tilde\FR) = d(X,\FR).$
Moreover, it can be easily verified that $(\tilde Ax-b)$ is the vectorization of $(AX-B).$ Therefore,
\[
d(X,\FR) = d(x,\tilde\FR)\le L\norm{c(\tilde Ax-b)}= L\normF{ c_M(AX-B)}.
\]
The proof is thus completed.
\end{proof}

\begin{thm}\label{thm:mRKconverge}
Suppose the system (\ref{eq:nC}) has a nonempty feasible region $\FR$. Algorithm~\ref{alg:block_mrk} generates a sequence of iterations $\Set{\X^k}_{k\ge 0}$ 
that converges linearly in expectation:
\begin{equation}\label{eq:mRKconverge}
\expect [d(X^\kp1,\FR)^2 | X^k] \le \left(1-\min_{\tau}2t_\tau\left(1-\frac{t_\tau}{2}\right)\frac{1}{L^2\normF{A}^2}\right)d(X^k,\FR)^2,
\end{equation}
where $L$ is the Hoffman constant given by (\ref{eq:hoffmanM}).
\end{thm}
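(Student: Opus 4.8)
The plan is to run the standard randomized-Kaczmarz energy argument in the Frobenius norm. I would fix the reference point to be the projection $X^* := \projF{X^k}$, so that $\normF{X^k - X^*} = d(X^k,\FR)$ and, since $X^* \in \FR$, $d(X^{\kp1},\FR) \le \normF{X^{\kp1}-X^*}$. It therefore suffices to bound $\expect[\normF{X^{\kp1}-X^*}^2 \mid X^k]$. To treat both branches of the update at once, I would write $R_\tau := (A_\tau X^k - B_\tau)_+$ for $\tau \in T$ and $R_\tau := A_\tau X^k - B_\tau$ for $\tau \in T'$; since each block lies entirely in $I_\le$ or in $I_=$, the $R_\tau$ are exactly the block-$\tau$ rows of $c_M(AX^k-B)$, and in both cases the update reads $X^{\kp1} = X^k - t_\tau A_\tau^T R_\tau/\normF{A_\tau}^2$.

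First I would expand the square for a fixed sampled block $\tau$:
\[
\normF{X^{\kp1}-X^*}^2 = \normF{X^k - X^*}^2 - \frac{2t_\tau}{\normF{A_\tau}^2}\langle R_\tau, A_\tau(X^k-X^*)\rangle + \frac{t_\tau^2}{\normF{A_\tau}^4}\normF{A_\tau^T R_\tau}^2,
\]
where I used $\langle A_\tau^T R_\tau, X^k-X^*\rangle = \langle R_\tau, A_\tau(X^k-X^*)\rangle$. The quadratic term is controlled crudely via $\normF{A_\tau^T R_\tau} \le \norm{A_\tau}_2\normF{R_\tau} \le \normF{A_\tau}\normF{R_\tau}$, which cancels two of the $\normF{A_\tau}$ factors and leaves a coefficient quadratic in $t_\tau$.

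The crux is the lower bound $\langle R_\tau, A_\tau(X^k-X^*)\rangle \ge \normF{R_\tau}^2$, and this is the only genuinely nontrivial step. For equality blocks it is immediate, since $A_\tau X^* = B_\tau$ gives $A_\tau(X^k-X^*) = R_\tau$. For inequality blocks I would split $A_\tau(X^k-X^*) = (A_\tau X^k - B_\tau) + (B_\tau - A_\tau X^*)$: pairing $R_\tau$ with the first summand gives $\langle (A_\tau X^k - B_\tau)_+, A_\tau X^k - B_\tau\rangle = \normF{R_\tau}^2$, because the positive part coincides with its argument on its support and vanishes elsewhere; pairing $R_\tau$ with the second summand is nonnegative because $R_\tau \ge 0$ entrywise while $B_\tau - A_\tau X^* \ge 0$ entrywise by feasibility of $X^*$. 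This sign bookkeeping over the inequality rows is where the structure of the problem enters; the rest is algebra.

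Combining the three pieces yields the deterministic per-block decrease
\[
\normF{X^{\kp1}-X^*}^2 \le \normF{X^k-X^*}^2 - 2t_\tau\Bigl(1-\tfrac{t_\tau}{2}\Bigr)\frac{\normF{R_\tau}^2}{\normF{A_\tau}^2},
\]
with the coefficient $t_\tau(2-t_\tau)$ strictly positive precisely because $t_\tau < 2$. Taking the conditional expectation with $p_\tau = \normF{A_\tau}^2/\normF{A}^2$ cancels the $\normF{A_\tau}^2$ denominators, and bounding each block coefficient below by $\min_\tau 2t_\tau(1-t_\tau/2)$ leaves the sum $\sum_\tau \normF{R_\tau}^2 = \normF{c_M(AX^k-B)}^2$, since the blocks in $T\cup T'$ partition all the rows. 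Finally, the tensor-setting Hoffman inequality of Lemma~\ref{lem:hoffmanM} gives $\normF{c_M(AX^k-B)}^2 \ge d(X^k,\FR)^2/L^2$, and substituting $\normF{X^k-X^*}^2 = d(X^k,\FR)^2$ together with $d(X^{\kp1},\FR) \le \normF{X^{\kp1}-X^*}$ produces the stated contraction factor.
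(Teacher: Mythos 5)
Your proposal is correct and follows essentially the same route as the paper: fix $X^* = \projF{X^k}$, expand the square, bound the quadratic term by $\normF{A_\tau^T R_\tau}^2 \le \normF{A_\tau}^2\normF{R_\tau}^2$, lower-bound the cross term by $\normF{R_\tau}^2$, take the expectation to recover $\normF{c_M(AX^k-B)}^2$, and finish with Lemma~\ref{lem:hoffmanM}. The only difference is one of exposition: you spell out the sign bookkeeping behind the cross-term inequality for inequality blocks (splitting off $B_\tau - A_\tau X^*$ and using feasibility of $X^*$), a step the paper's proof asserts without detail.
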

\begin{proof}
Let $\projF{\cdot}$ denote the projection onto the feasible region $\FR$. Then, for $\tau\subset I_\le,$
\begin{align*}
\normF{X^\kp1-\projF{X^\kp1}}^2  \le & \normF{X^\kp1-\projF{X^k}}^2\\
= & \normF{X^k - \frac{t_{\tau}A_{\tau}^T\left( A_{\tau}X^k - B_{\tau} \right)_+}{\|A_{\tau}\|_F^2}-\projF{X^k}}^2\\
\le & \normF{X^k-\projF{X^k}}^2 + t_\tau^2\frac{\normF{(A_{\tau}X^k - B_{\tau})_+}^2}{\normF{A_\tau}^2}\\
& - 2\dotP{X^k-\projF{X^k}}{\frac{t_{\tau}A_{\tau}^T\left( A_{\tau}X^k - B_{\tau} \right)_+}{\|A_{\tau}\|_F^2}}\\
\le &  \normF{X^k-\projF{X^k}}^2 +  t_\tau^2\frac{\normF{(A_{\tau}X^k - B_{\tau})_+}^2}{\normF{A_\tau}^2}\\
& -\frac{2t_\tau\normF{(A_\tau X^k - B_{\tau})_+}^2}{\|A_{\tau}\|_F^2}\\
= & \normF{X^k-\projF{X^k}}^2 - \frac{2t_\tau}{\normF{A_\tau}^2}(1-\frac{t_\tau}{2})\normF{(A_\tau X^k - B_{\tau})_+}^2.
\end{align*}
Similarly, for $\tau\subset I_=,$
\[
\normF{X^\kp1-\projF{X^\kp1}}^2  \le\normF{X^k-\projF{X^k}}^2 - \frac{2t_\tau}{\normF{A_\tau}^2}(1-\frac{t_\tau}{2})\normF{(A_\tau X^k - B_{\tau})}^2.
\]

Taking the expectation with respect to the specified probability distribution gives
\[
\expect [d(X^\kp1,\FR)^2 | X^k] \le d(X^k,\FR)^2-\min_{\tau}2t_\tau\left(1-\frac{t_\tau}{2}\right) \frac{\normF{c_M(AX^k-B)}^2}{\normF{A}^2}.
\]
Then by Lemma~\ref{lem:hoffmanM},
\[
\expect [d(X^\kp1,\FR)^2 | X^k] \le \left(1-\min_{\tau}2t_\tau\left(1-\frac{t_\tau}{2}\right)\frac{1}{L^2\normF{A}^2}\right)d(X^k,\FR)^2,
\]
where $L$ is the Hoffman constant given by (\ref{eq:hoffmanM}).
\end{proof}

\section{A TRK Method for Linear Constraints}\label{sec:trkL}
Suppose that $\mathcal A\in\mathbb R^{m\times l\times n}$, $\mathcal X\in\mathbb R^{l\times p\times n}$ and $\mathcal B\in\mathbb R^{m\times p\times n}$.
In this section, we consider the tensor linear feasibility problem given by the constraints
\begin{equation}\label{eq:LC}
\begin{cases}
\mathcal A_{i::}\ast\mathcal X \le \mathcal B_{i::} & (i \in I_\le) \\
\mathcal A_{i::}\ast\mathcal X = \mathcal B_{i::} & (i \in I_=),
\end{cases}
\end{equation}
where $I_\le$ and $I_=$ are disjoint index sets that partition the set $I=\Set{1,\cdots,m}$, and the inequality $\mathcal A_{i::}\ast\mathcal X \le \mathcal B_{i::}$ holds entrywise.  We assume that
the system (\ref{eq:LC}) has a nonempty feasible region $\FR$.

In Algorithm~\ref{alg:trklc}, we propose a variant of the TRK method, TRK-L, for solving the system (\ref{eq:LC}).
The method projects towards the boundary of the solution set of the sampled constraint in the coordinates in which it is violated.  That is, if $i_k$, the sampled index in the $k$th iteration, indexes an equality constraint, Algorithm~\ref{alg:trklc} projects towards the set of solutions to this constraint, $\{\X: \A_{i_k ::} \ast \X = \B_{i_k ::}\}$.  Otherwise, if $i_k$ indexes an inequality constraint, then Algorithm~\ref{alg:trklc} projects towards the solution set of this inequality $\{\X: \A_{i_k ::} \ast \X \le \B_{i_k ::}\}$ in the coordinates where $\X^k$ does not already satisfy this entrywise inequality, that is, the support of $\left( \mathcal{A}_{i_k::}\ast\mathcal{X}^k - \mathcal{B}_{i_k::} \right)_+$.  
\begin{algorithm}[!htp]
\caption{TRK-L for tensor linear constraints}\label{alg:trklc}
\begin{algorithmic}[1] 
\State \textbf{Input:} \begin{itemize}
\item $\mathcal X^0 \in \mathbb{R}^{l\times p \times n}$, $\mathcal A\in \mathbb{R}^{m\times l \times n}$, $\mathcal{B} \in \mathbb{R}^{m \times p \times n}$, 
indices partition $I_\le$ and $I_=$, 
 \item probabilities $p_i = \|\mathcal A_{i::}\|_F^2/\|\mathcal A\|_F^2$ and 
 \item step sizes 
 $t_i<2\normF{\A_{i::}}^2/(\max_{1\le j\le n}{\normF{\fft{\A_{i::}}_j}^2})$ for $i=1,2,\cdots,m$
 \end{itemize}
\For{$k = 0,1,2,\dots$}
    \State Sample $i_k \sim \Set{p_i}$
    \If{$i\in I_\le$}
    \State $\mathcal{X}^{k+1} = \mathcal{X}^k - t_{i_k}\mathcal{A}_{i_k::}^T\ast\left( \mathcal{A}_{i_k::}\ast\mathcal{X}^k - \mathcal{B}_{i_k::} \right)_+/
    \|\mathcal A_{i_k::}\|_F^2$
    \Else
    \State $\mathcal{X}^{k+1} = \mathcal{X}^k - t_{i_k}\mathcal{A}_{i_k::}^T \ast \left( \mathcal{A}_{i_k::}\ast\mathcal{X}^k - \mathcal{B}_{i_k::} \right)
    /\|\mathcal A_{i_k::}\|_F^2$
    \EndIf
\EndFor
\State \textbf{Output:} last iterate $\mathcal{X}^{k+1}$
\end{algorithmic}
\end{algorithm}

We will show that the TRK-L method described in Algorithm~\ref{alg:trklc} converges linearly in expectation to the feasible region
$\FR.$ To this end, we first prove two lemmas.
\begin{lem}\label{lem1}
Suppose the system (\ref{eq:LC}) has a nonempty feasible region $\FR$.
For a point $\X\notin\FR$, let $\Y$ be the projection of $\X$ onto $\FR$, i.e.
$\Y$ is the closest point to $\X$ in $\FR$ with respect to $\|\cdot\|_F$.
Define the subset of indices of active inequality constraints for $\Y$,
\begin{equation}\label{set:indices}
S:=\Set{(i,j,k): i\in I_\le \textrm{ and } (\A\ast\Y)(i,j,k)=\B(i,j,k)},
\end{equation}
and define the set
$$\FR_{S}:=\left\{\Z: (\A\ast\Z)(i,j,k)\le\B(i,j,k) \textrm{ for } (i,j,k)\in S ,
~\A_{i::}\ast\Z= \mathcal B_{i::} \textrm{ for } i\in I_=\right\}.$$
Then $\X\notin\FR_S$ and $\Y$ is also the projection of $\X$ onto the set $\FR_S$.
\end{lem}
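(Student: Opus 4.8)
The plan is to exploit the fact that $\FR_S$ is a \emph{relaxation} of $\FR$: it retains every equality constraint and only those inequality constraints that are active at $\Y$, discarding the inactive ones. Since every $\Z\in\FR$ satisfies in particular the constraints defining $\FR_S$, we have $\FR\subseteq\FR_S$; consequently $\Y\in\FR_S$ and $d(\X,\FR_S)\le d(\X,\FR)=\normF{\X-\Y}$. Both $\FR$ and $\FR_S$ are closed and convex, being intersections of the half-spaces $\Set{\Z:(\A\ast\Z)(i,j,k)\le\B(i,j,k)}$ and hyperplanes $\Set{\Z:\A_{i::}\ast\Z=\B_{i::}}$ determined by the \emph{linear} map $\Z\mapsto\A\ast\Z$ (the t-product is linear in its second factor). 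This convexity is what makes the orthogonal projections well defined and unique.

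The technical core is a single ``feasible-step'' claim: for any $\W\in\FR_S$ with $\W\ne\Y$, the point $\Y_t:=\Y+t(\W-\Y)$ lies in $\FR$ for all sufficiently small $t>0$. I would verify this constraint-by-constraint. For $i\in I_=$ both $\Y$ and $\W$ satisfy $\A_{i::}\ast\,\cdot\,=\B_{i::}$, so by linearity $\Y_t$ does too. For an active index $(i,j,k)\in S$ we have $(\A\ast\Y)(i,j,k)=\B(i,j,k)$ and $(\A\ast\W)(i,j,k)\le\B(i,j,k)$, so the convex combination still satisfies ``$\le$'' for every $t\in[0,1]$. For an inactive index (i.e.\ $i\in I_\le$ with $(i,j,k)\notin S$) the constraint holds \emph{strictly} at $\Y$, namely $(\A\ast\Y)(i,j,k)<\B(i,j,k)$, so by continuity it persists for small $t>0$; since there are finitely many constraints, a single threshold $t_0>0$ works for all of them simultaneously.

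With this claim in hand, both assertions follow by contradiction with the minimality of $\Y=\projF{\X}$. For the first, suppose $\X\in\FR_S$; since $\X\notin\FR$ forces $\X\ne\Y$, applying the claim with $\W=\X$ gives $\Y_t=(1-t)\Y+t\X\in\FR$ with $\normF{\X-\Y_t}=(1-t)\normF{\X-\Y}<\normF{\X-\Y}$, contradicting that $\Y$ is the closest point of $\FR$ to $\X$; hence $\X\notin\FR_S$. For the second, let $\Y':=\proj{\FR_S}(\X)$ and suppose $\Y'\ne\Y$; then $\normF{\X-\Y'}=d(\X,\FR_S)\le\normF{\X-\Y}$, with strict inequality, since equality would force $\Y=\Y'$ by uniqueness of the projection onto $\FR_S$. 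Applying the claim with $\W=\Y'$, the scalar function $g(t):=\normF{\X-\Y_t}^2$ is convex with $g(1)=\normF{\X-\Y'}^2<g(0)$; convexity then yields $g'(0)\le g(1)-g(0)<0$, so $\normF{\X-\Y_t}<\normF{\X-\Y}$ for small $t>0$ while $\Y_t\in\FR$ --- again contradicting minimality of $\Y$. Therefore $\proj{\FR_S}(\X)=\Y$.

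The main obstacle I anticipate is the ``feasible-step'' claim, specifically making the continuity argument for the inactive inequalities uniform across all indices (so that one $t_0$ suffices) and correctly using that inactive constraints are \emph{strictly} satisfied at $\Y$; this is exactly where the choice of $S$ as the set of active inequalities is essential. The only other point requiring care is the convexity/directional-derivative step in the second part, where I rely on $g$ being a quadratic with positive leading coefficient $\normF{\W-\Y}^2>0$ to pass from $g(1)<g(0)$ to $g'(0)<0$.
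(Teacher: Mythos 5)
Your proposal is correct and follows essentially the same route as the paper: both arguments hinge on the observation that moving a short distance from $\Y$ toward any point of $\FR_S$ stays inside $\FR$ (equalities and active inequalities are preserved by convexity, inactive inequalities by strictness at $\Y$ together with finiteness of the constraint set), which then contradicts the minimality of $\Y$ over $\FR$. The only cosmetic differences are that the paper parametrizes the segment from $\X$ toward $\Y$ in the first part and extracts the second conclusion via the triangle inequality $\normF{\X-\Y'_\tau}\le\tau\normF{\X-\Y}+(1-\tau)\normF{\X-\Y'}$ for arbitrary $\Y'\in\FR_S$, whereas you specialize to $\Y'=\proj{\FR_S}(\X)$ and use the directional derivative of the quadratic $g(t)$; both are valid.
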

\begin{proof}
We first prove by contradiction that $\X\notin\FR_S$. Suppose $\X\in\FR_S$, then there exists $(i,j,k)\notin S$
such that $(\A\ast\X)(i,j,k)>\B(i,j,k)$, while $(\A\ast\Y)(i,j,k)<\B(i,j,k)$ for all $(i,j,k)\notin S$ with $i\in I_\le$. 
Thus, for each $(i,j,k)\notin S$, there exists $\tau_{(i,j,k)}\in (0,1)$ such that $\Y_{\tau_{(i,j,k)}}:=\tau_{(i,j,k)}\Y+(1-\tau_{(i,j,k)})\X$ satisfies $(\A\ast\Y_{\tau_{(i,j,k)}})(i,j,k) \le \B(i,j,k)$.  Define $\tau = \min_{(i,j,k) \not\in S} \tau_{(i,j,k)}$ and note that $\Y_{\tau}:=\tau\Y+(1-\tau)\X$ satisfies $\Y_\tau\in\FR$ and $\Y_\tau$ is closer to $\X$ than $\Y$, which contradicts the assumption that 
$\Y$ is the projection of $\X$ onto $\FR$. Hence, $\X\notin\FR_S$. 

It remains to show that $\Y$ is the projection of $\X$ onto $\FR_S$.
For arbitrary $\Y'\in\FR_S$, it's obvious that $\Y'_\tau:=\tau\Y+(1-\tau)\Y'\in\FR_S$ 
for all $\tau\in [0,1]$. Since $(\A\ast\Y)(i,j,k)<\B(i,j,k)$ for all $(i,j,k)\notin S$ with $i\in I_\le$,
there exists $\tau>0$ sufficiently small such that $\Y'_\tau\in \FR$. It follows that 
\[
\normF{\X-\Y'_\tau}\le \tau\normF{\X-\Y}+(1-\tau)\normF{\X-\Y'}.
\]
By assumption, $\normF{\X-\Y'_\tau}\ge\normF{\X-\Y}$, which implies that 
$\normF{\X-\Y'}\ge \normF{\X-\Y}$. Hence $\Y$ is indeed the closest point to $\X$ in $\FR_S$,
i.e. $\Y$ is the projection of $\X$ onto the set $\FR_S$.
\end{proof}

For the set of indices $S$ given in (\ref{set:indices}), define the function 
$c_S:\mathbb R^{m\times p\times n}\to \mathbb R^{m\times p\times n}$ by
\[ c_S(\Z)(i,j,k) = 
\begin{cases}
\max\left\{\Z(i,j,k) ,0\right\} & ((i,j,k) \in S)\\
0 & (i \in I_\le \textrm{ and } (i,j,k)\notin S)\\
\Z(i,j,k) & (i \in I_=).
\end{cases}
\]

\begin{lem}\label{lem2}
Let $S$ be the set of indices given in (\ref{set:indices}).
Define a set $C$ by 
\[
C:=\left\{\Z: (\A\ast\Z)(i,j,k)\le 0 \textrm{ for } (i,j,k)\in S ,
~\A_{i::}\ast\Z= 0 \textrm{ for } i\in I_=\right\},
\]
and define the set $E$ by
\[
E:=\Set{\Z\in C\compl: \proj{C}(\Z) = 0 },
\]
where $\proj{C}(\cdot)$ denotes the projection onto the set $C$.
Then there exists a constant $\gamma_S>0$ such that for all $\Z\in E$,
\[
\normF{c_S(\A\ast\Z)}\ge\gamma_S\normF{\Z}.
\]
\end{lem}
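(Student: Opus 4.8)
The plan is to exploit the positive homogeneity of both sides of the desired inequality together with a compactness argument, after identifying $E$ with (the nonzero part of) the polar cone of $C$.

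First I would record two structural facts. Since the map $\Z\mapsto\A\ast\Z$ is linear, the set $C$ is cut out by finitely many homogeneous linear inequalities and equalities, and is therefore a closed convex cone. Directly from the definition of $c_S$, I would observe the key equivalence that for every $\Z$,
\[
c_S(\A\ast\Z)=0 \iff \Z\in C,
\]
because $c_S(\A\ast\Z)=0$ forces $(\A\ast\Z)(i,j,k)\le 0$ for $(i,j,k)\in S$ and $(\A\ast\Z)(i,j,k)=0$ for $i\in I_=$, which is exactly membership in $C$. I would also note that $\Z\mapsto\normF{c_S(\A\ast\Z)}$ and $\Z\mapsto\normF{\Z}$ are continuous and positively homogeneous of degree one (for $\lambda>0$, $c_S(\lambda(\A\ast\Z))=\lambda\,c_S(\A\ast\Z)$), so it suffices to prove the bound on the unit sphere.

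Next I would characterize $E$. By the variational characterization of the projection onto a closed convex set, $\proj{C}(\Z)=0$ holds iff $\dotP{\Z}{\Y'}\le 0$ for every $\Y'\in C$, i.e. iff $\Z$ lies in the polar cone $C^{\circ}$. Hence $E=C^{\circ}\setminus C$. Since any $\Z\in C\cap C^{\circ}$ satisfies $\normF{\Z}^2=\dotP{\Z}{\Z}\le 0$ and thus $\Z=0$, we have $C\cap C^{\circ}=\Set{0}$, so in fact $E=C^{\circ}\setminus\Set{0}$. In particular, no point of $E$ lies in $C$, and $E$ is closed under multiplication by positive scalars.

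Finally I would run the compactness argument. The set $K:=\Set{\Z\in C^{\circ}:\normF{\Z}=1}$ is closed (polar cones are closed, equivalently $\proj{C}$ is continuous so its zero set is closed) and bounded, hence compact. For every $\Z\in K$ we have $\Z\neq 0$ and $\Z\notin C$, so by the equivalence above $c_S(\A\ast\Z)\neq 0$, i.e. $\normF{c_S(\A\ast\Z)}>0$. Thus the continuous function $\Z\mapsto\normF{c_S(\A\ast\Z)}$ attains a strictly positive minimum $\gamma_S>0$ on $K$, and positive homogeneity upgrades this to $\normF{c_S(\A\ast\Z)}\ge\gamma_S\normF{\Z}$ for all $\Z\in E$ by writing $\Z=\normF{\Z}\cdot(\Z/\normF{\Z})$ with $\Z/\normF{\Z}\in K$. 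The main obstacle is the strict positivity of the minimum: compactness and continuity only guarantee that the minimum is attained, and everything hinges on ruling out a minimizer at which $c_S(\A\ast\Z)$ vanishes. This is precisely where the identity $c_S(\A\ast\Z)=0\iff\Z\in C$ combines with $C\cap C^{\circ}=\Set{0}$ to force the objective to be nonzero throughout $K$.
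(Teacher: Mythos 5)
Your proof is correct and follows essentially the same route as the paper's: positive homogeneity reduces the claim to the unit sphere, where compactness and the non-vanishing of $\normF{c_S(\A\ast\Z)}$ yield a strictly positive minimum $\gamma_S$. The only difference is that you explicitly justify the two facts the paper asserts without comment---compactness of the normalized set (via the polar-cone identification $E=C^{\circ}\setminus\Set{0}$) and strict positivity on $E$ (via the equivalence $c_S(\A\ast\Z)=0\iff\Z\in C$)---which is a welcome tightening but not a different argument.
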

\begin{proof}
It follows from the homogeneity of $c_S(\A\ast\Z)$ that
\[
\min_{\Z\in E} \frac{\normF{c_S(\A\ast\Z)}}{\normF{\Z}} = \min_{\Z\in E} \normF{c_S(\A\ast\frac{\Z}{\normF{\Z}})}=\min_{\Z\in E, \normF{\Z}=1}\normF{c_S(\A\ast\Z)}.
\]
By the compactness of the set $\Set{\Z\in E: \normF{\Z}=1}$ and the fact that $\normF{c_S(\A\ast\Z)}> 0$ for all $\Z\in E$,
\[
\gamma_S:=\min_{\Z\in E, \normF{\Z}=1}\normF{c_S(\A\ast\Z)}> 0.
\]
Therefore,
\[
\normF{c_S(\A\ast\Z)}\ge\gamma_S\normF{\Z} \textrm{ for all } \Z\in E.
\]
\end{proof}

The following theorem is an extension of a result by Hoffman in \cite{hoffman1952}.
\begin{thm}\label{thm:dist}
Suppose the system (\ref{eq:LC}) has nonempty feasible region $\FR$. There exists a constant $\gamma>0$ such that for any 
$\X\in\mathbb R^{l\times p\times n}$,
\begin{equation}\label{eq:hoffman}
d(\X,\FR)\le \gamma\normF{c_T(\A\ast\X-\B)}.
\end{equation}
\end{thm}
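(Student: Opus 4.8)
The plan is to run the classical Hoffman argument, routed entirely through the two preceding lemmas so that the tensor $t$-product enters only through the linear map $\Z \mapsto \A \ast \Z$. If $\X \in \FR$ then $d(\X,\FR) = 0$ and the bound holds trivially, so fix $\X \notin \FR$ and set $\Y := \projF{\X}$ and $\Z := \X - \Y$, so that $d(\X,\FR) = \normF{\Z}$. Let $S$ be the active index set from \eqref{set:indices}, and let $C$ and $E$ be the cone and set associated to $S$ in Lemma~\ref{lem2}. The goal is to show $\Z \in E$, invoke Lemma~\ref{lem2} to control $\normF{\Z}$, and then compare $c_S(\A\ast\Z)$ with $c_T(\A\ast\X-\B)$.

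First I would show $\Z \in E$. Because every constraint indexed by $S$ or by $I_=$ is active at $\Y$ — that is, $(\A\ast\Y)(i,j,k) = \B(i,j,k)$ for $(i,j,k) \in S$ and $\A_{i::}\ast\Y = \B_{i::}$ for $i \in I_=$ — a direct substitution shows that $\W \in \FR_S$ if and only if $\W - \Y \in C$, so $\FR_S = \Y + C$. By Lemma~\ref{lem1}, $\Y$ is the projection of $\X$ onto $\FR_S = \Y + C$, and by translation invariance of the orthogonal projection onto the closed convex cone $C$ this is equivalent to $\proj{C}(\Z) = 0$. Since $\X \notin \FR$ forces $\Z \neq 0$, we must have $\Z \notin C$ (otherwise $\proj{C}(\Z) = \Z \neq 0$), so $\Z \in C\compl$ and hence $\Z \in E$. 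Lemma~\ref{lem2} then yields $\normF{\Z} \le \gamma_S^{-1}\normF{c_S(\A\ast\Z)}$.

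Next I would compare the two functions entrywise. Writing $\A \ast \Z = \A\ast\X - \A\ast\Y$ and using the activeness relations above, one checks that $c_S(\A\ast\Z)(i,j,k) = c_T(\A\ast\X - \B)(i,j,k)$ whenever $(i,j,k) \in S$ or $i \in I_=$, while $c_S(\A\ast\Z)(i,j,k) = 0$ on the remaining indices (those with $i \in I_\le$ and $(i,j,k)\notin S$). In particular $|c_S(\A\ast\Z)| \le |c_T(\A\ast\X-\B)|$ entrywise, so $\normF{c_S(\A\ast\Z)} \le \normF{c_T(\A\ast\X-\B)}$, and combining with the previous bound gives $d(\X,\FR) = \normF{\Z} \le \gamma_S^{-1}\normF{c_T(\A\ast\X-\B)}$.

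The main obstacle is that $\gamma_S$ depends on $\X$ through the active set $S$, so this is not yet a single uniform constant. I would close the argument by noting that $S$ ranges over subsets of the finite index set attached to $I_\le$, so only finitely many such sets arise; setting $\gamma := \max_S \gamma_S^{-1}$ over all active sets $S$ that occur produces a finite positive constant with $d(\X,\FR) \le \gamma\normF{c_T(\A\ast\X-\B)}$ for every $\X$, as required. The point needing care is that each $\gamma_S$ delivered by Lemma~\ref{lem2} is strictly positive (via compactness of the unit sphere intersected with $E$), which is exactly what makes the finite maximum well defined and the resulting $\gamma$ finite.
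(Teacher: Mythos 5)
Your proposal is correct and follows essentially the same route as the paper's proof: reduce to the active set $S$ via Lemma~\ref{lem1}, apply Lemma~\ref{lem2} to $\Z=\X-\Y$, compare $c_S(\A\ast\Z)$ with $c_T(\A\ast\X-\B)$, and take the extremum of $\gamma_S$ over the finitely many possible active sets. Your only addition is a more explicit justification (via $\FR_S=\Y+C$ and translation invariance of the cone projection) of the step $\X-\Y\in E$, which the paper asserts without detail.
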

\begin{proof}
For $\X\in\FR$, it is obvious that
\[
d(\X,\FR) = 0 =\normF{c_T(\A\ast\X-\B)}.
\]
For $\X\notin\FR$, let $\Y$ be the projection of $\X$ onto $\FR$. Let $S$ be the subset of indices given in (\ref{set:indices}),
and let $\FR_S$ be the set defined as in Lemma~\ref{lem1}.
Then by Lemma~\ref{lem1}, $\X\not\in\FR_S$ and $\Y$ is also the projection of $\X$ onto $\FR_S$.
It follows that $\X-\Y$ is in the set $E$ defined as in Lemma~\ref{lem2}. Hence,
\[
\normF{c_S(\A\ast\X-\A\ast\Y)}\ge\gamma_S\normF{\X-\Y}
\]
for some constant $\gamma_S>0$.  By the definition of the set $S$,
\[
\normF{c_S(\A\ast\X-\A\ast\Y)} = \normF{c_S(\A\ast\X-B)}.
\]
Since $\normF{c_S(\A\ast\X-B)}\le \normF{c_T(\A\ast\X-B)}$,
it follows that
\[
 \normF{c_T(\A\ast\X-B)}\ge\gamma_S\normF{\X-\Y}.
\]
Since there are only a finite number of possible $S$ given in (\ref{set:indices}),
\[
\min_S \gamma_S > 0
\]
Therefore, let $\gamma:=(\min_S \gamma_S)^{-1}>0,$
\[
d(\X,\FR)\le\gamma\normF{c_T(\A\ast\X-\B)},
\]
for all $\X\in\mathbb R^{l\times p\times n}$.
\end{proof}
\noindent
The smallest constant $\gamma>0$ such that the inequality (\ref{eq:hoffman}) holds for all $\X\in\mathbb R^{l\times p\times n}$ 
can be considered as a generalization of the Hoffman constant to our tensor setting.

It was shown in \cite[Lemma 2.8]{chen2021regularized} that
\[
\normF{\A\ast\X}\le\sqrt{n}\normF{\A}\normF{\X}=\normF{\fft{\A}}\normF{\X}.
\]
We use the DFT of $\A$ to derive a tighter upper bound in the following lemma.
\begin{lem}\label{lem:productNorm}
If $\mathcal A\in\mathbb R^{m\times l\times n}$ and $\mathcal X\in\mathbb R^{l\times p\times n}$, then
\[
\normF{\A\ast\X}\le\max_{1\le k\le n}\norm{\fft{\A}_k}\normF{\X}.
\]
\end{lem}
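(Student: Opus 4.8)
The plan is to pass to the Fourier domain, where $\bcirc{\A}$ is block diagonalized by \eqref{eq:bdiag}, and to exploit both the unitary invariance of the Frobenius norm and the fact that $\fold{\cdot}$ and $\unfold{\cdot}$ preserve it. First I would use the definition \eqref{def:tprod} of the t-product to write $\normF{\A\ast\X} = \normF{\bcirc{\A}\,\unfold{\X}}$, since folding and unfolding merely rearrange entries.

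Next, since $F_n$ is unitary, both $F_n\otimes I_m$ and $F_n\otimes I_l$ are unitary matrices (of sizes $mn\times mn$ and $ln\times ln$ respectively), and the Frobenius norm is invariant under left multiplication by a unitary matrix. Inserting $(F_n^*\otimes I_l)(F_n\otimes I_l)=I$ and applying \eqref{eq:bdiag}, I would obtain
\[
\normF{\bcirc{\A}\,\unfold{\X}} = \normF{(F_n\otimes I_m)\bcirc{\A}(F_n^*\otimes I_l)(F_n\otimes I_l)\,\unfold{\X}} = \normF{\bdiag(\fft{\A})\,W},
\]
where $W := (F_n\otimes I_l)\,\unfold{\X}$. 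Because $F_n\otimes I_l$ is unitary, $\normF{W} = \normF{\unfold{\X}} = \normF{\X}$.

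The third step is to exploit the block-diagonal structure. Partitioning $W$ conformally with $\bdiag(\fft{\A})$ into $n$ blocks $W_1,\dots,W_n$, each of size $l\times p$, the $k$-th block of $\bdiag(\fft{\A})\,W$ is $\fft{\A}_k W_k$, so
\[
\normF{\bdiag(\fft{\A})\,W}^2 = \sum_{k=1}^n \normF{\fft{\A}_k W_k}^2 \le \sum_{k=1}^n \norm{\fft{\A}_k}^2\normF{W_k}^2 \le \max_{1\le k\le n}\norm{\fft{\A}_k}^2\sum_{k=1}^n\normF{W_k}^2,
\]
using the sub-multiplicative bound $\normF{\fft{\A}_k W_k}\le\norm{\fft{\A}_k}\normF{W_k}$ with $\norm{\cdot}$ the spectral norm. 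Since $\sum_k\normF{W_k}^2 = \normF{W}^2 = \normF{\X}^2$, taking square roots yields the claim.

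The norm identities are routine; the only point requiring care is the bookkeeping of the unitary transformation, namely verifying that $F_n\otimes I_m$ and $F_n\otimes I_l$ are unitary of the correct dimensions so that the Frobenius norm is preserved at each stage, and that the conformal block partition of $W$ matches the diagonal blocks of $\bdiag(\fft{\A})$. I expect this to be the main (though mild) obstacle, as everything else follows directly from \eqref{eq:bdiag}, unitary invariance, and spectral-norm sub-multiplicativity.
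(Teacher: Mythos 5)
Your proof is correct and follows essentially the same route as the paper's: both pass to the Fourier domain via \eqref{eq:bdiag}, use unitary invariance of the Frobenius norm, and bound each diagonal block by spectral-norm sub-multiplicativity before pulling out the maximum. Your blocks $W_k$ are exactly the paper's $(F_n(k,:)\otimes I_l)\unfold{\X}$, so the arguments coincide.
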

\begin{proof}
By (\ref{eq:bdiag}),
\begin{align*}
\normF{\A\ast\X}^2 &= \normF{\bcirc{\A}\unfold{\X}}^2\\
&= \normF{\bdiag(\fft{A})(F_n\otimes I_l)\unfold{\X}}^2\\
&=\sum_{k=1}^{n}\normF{\fft{\A}_k(F_n(k,:)\otimes I_l)\unfold{\X}}^2\\
&\le \sum_{k=1}^{n}\norm{\fft{\A}_k}^2\normF{(F_n(k,:)\otimes I_l)\unfold{\X}}^2 ~(\textrm{by singular value decomposition})\\
&= \max_{1\le k\le n}\norm{\fft{\A}_k}^2\sum_{k=1}^{n}\normF{(F_n(k,:)\otimes I_l)\unfold{\X}}^2\\
& = \max_{1\le k\le n}\norm{\fft{\A}_k}^2\normF{\X}^2.
\end{align*}
The proof is thus completed.
\end{proof}

Now we show that Algorithm~\ref{alg:trklc} converges linearly in expectation,
with the rate of convergence given in terms of the generalized Hoffman constant.
\begin{thm}\label{thm:tRKconverge}
Suppose the system (\ref{eq:LC}) has a nonempty feasible region $\FR$. Algorithm~\ref{alg:trklc} generates a sequence of iterations $\Set{\X^k}_{k\ge 0}$ 
that converges linearly in expectation:
\begin{equation}\label{eq:tRKconverge}
\expect [d(\X^\kp1,\FR)^2 | \X^k] \le\left(1-\min_{1\le i \le m}2t_i\left(1-\frac{t_{i}\max_{1\le j \le n}\normF{\fft{\A_{i::}}_j}^2}{2\normF{\A_{i::}}^2}\right) \frac{1}{\gamma^2\normF{\A}^2}\right)d(\X^k,\FR)^2
\end{equation}
for each $k$, where $\fft{\cdot}$ is the DFT given by (\ref{eq:fft}),
and $\gamma>0$ is the smallest constant such that (\ref{eq:hoffman}) holds. 
\end{thm}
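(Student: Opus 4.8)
The plan is to follow the template of the B-MRK convergence proof (Theorem~\ref{thm:mRKconverge}), replacing each matrix operation by its t-product analogue and invoking the two tensor-specific lemmas established above. Writing $\Y:=\projF{\X^k}$, I would start from the contraction $\normF{\X^\kp1-\projF{\X^\kp1}}\le\normF{\X^\kp1-\Y}$, valid because $\projF{\X^\kp1}$ is the closest feasible point to $\X^\kp1$. Expanding $\normF{\X^\kp1-\Y}^2$ with the update rule for a sampled index $i_k\in I_\le$ yields $\normF{\X^k-\Y}^2$, a quadratic term in $t_{i_k}$, and a cross term; the aim is to bound the quadratic term above and the cross term below so that the net change is a strictly negative multiple of the squared residual.

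For the quadratic term I would apply Lemma~\ref{lem:productNorm} to $\A_{i_k::}^T\ast(\cdot)$, obtaining the bound $\max_j\norm{\fft{\A_{i_k::}^T}_j}^2\,\normF{(\A_{i_k::}\ast\X^k-\B_{i_k::})_+}^2$ for $\normF{\A_{i_k::}^T\ast(\A_{i_k::}\ast\X^k-\B_{i_k::})_+}^2$. Since $\A_{i_k::}^T$ is a single column slice, each Fourier block $\fft{\A_{i_k::}^T}_j$ is a vector, so its spectral norm equals its Frobenius norm; moreover $\normF{\fft{\A_{i_k::}^T}_j}=\normF{\fft{\A_{i_k::}}_j}$ because transposition conjugates each Fourier block. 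For the cross term I would move $\A_{i_k::}^T$ across the inner product using the adjoint identity $\dotP{\A\ast\X}{\B}=\dotP{\X}{\A^T\ast\B}$ from the preliminaries, producing $\dotP{\A_{i_k::}\ast(\X^k-\Y)}{(\A_{i_k::}\ast\X^k-\B_{i_k::})_+}$. With $R:=\A_{i_k::}\ast\X^k-\B_{i_k::}$ and the feasibility of $\Y$ (so $\A_{i_k::}\ast\Y-\B_{i_k::}\le 0$ entrywise on inequality rows), I would split $\A_{i_k::}\ast(\X^k-\Y)=R-(\A_{i_k::}\ast\Y-\B_{i_k::})$ and note that $\dotP{R}{R_+}=\normF{R_+}^2$ while the remaining inner product is nonpositive on the support of $R_+$, giving the cross-term lower bound $\normF{(\A_{i_k::}\ast\X^k-\B_{i_k::})_+}^2$. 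The equality case $i_k\in I_=$ is identical with $R$ replacing $R_+$.

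Substituting both estimates collapses the bound to $\normF{\X^k-\Y}^2-\frac{2t_{i_k}}{\normF{\A_{i_k::}}^2}(1-\frac{t_{i_k}\max_j\normF{\fft{\A_{i_k::}}_j}^2}{2\normF{\A_{i_k::}}^2})$ times the squared residual, the parenthesized factor being positive exactly under the stated step-size restriction. Taking the conditional expectation with $p_i=\normF{\A_{i::}}^2/\normF{\A}^2$ cancels $\normF{\A_{i::}}^2$ from each denominator, so the total decrement equals $\frac{1}{\normF{\A}^2}\sum_i \alpha_i$ times the $i$-th squared residual, where $\alpha_i:=2t_i(1-\frac{t_i\max_j\normF{\fft{\A_{i::}}_j}^2}{2\normF{\A_{i::}}^2})$. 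Bounding each $\alpha_i$ below by $\min_i\alpha_i$ and observing that the sum of squared residuals equals $\normF{c_T(\A\ast\X^k-\B)}^2$, I would close the argument with Theorem~\ref{thm:dist} in the form $\normF{c_T(\A\ast\X^k-\B)}^2\ge\gamma^{-2}d(\X^k,\FR)^2$, which yields the stated contraction factor.

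The step I expect to be the main obstacle is the cross-term lower bound in the inequality case: one must exploit that $\Y$ is feasible so that the residual $\A_{i_k::}\ast\Y-\B_{i_k::}$ is entrywise nonpositive, and then argue its inner product against $(\A_{i_k::}\ast\X^k-\B_{i_k::})_+$ is nonpositive because the two factors carry opposite signs precisely on the support of the positive part. A secondary technical point is verifying $\normF{\fft{\A_{i::}^T}_j}=\normF{\fft{\A_{i::}}_j}$ so that both the step-size condition and the final rate are phrased through $\fft{\A_{i::}}_j$; this follows from $\bcirc{\A^T}=\bcirc{\A}^T$ together with the block diagonalization~(\ref{eq:bdiag}).
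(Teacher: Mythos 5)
Your proposal is correct and follows essentially the same route as the paper's proof: the same contraction via $\normF{\X^{k+1}-\projF{\X^{k+1}}}\le\normF{\X^{k+1}-\projF{\X^k}}$, Lemma~\ref{lem:productNorm} for the quadratic term, the adjoint/feasibility argument for the cross term, and Theorem~\ref{thm:dist} to close. The only difference is that you spell out the cross-term lower bound and the identification $\normF{\fft{\A_{i::}^T}_j}=\normF{\fft{\A_{i::}}_j}$, both of which the paper leaves implicit, and both of which you justify correctly.
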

\begin{proof}
Let $\projF{\cdot}$ denote the projection onto the feasible region $\FR$. If $i_k\in I_\le$, then, by Lemma~\ref{lem:productNorm},
\begin{align*}
\normF{\X^\kp1-\projF{\X^\kp1}}^2  &\le \normF{\X^\kp1-\projF{\X^k}}^2\\
& \le \left\|\X^k-\projF{\X^k}-t_{i_k}\frac{\A_{i_k::}^T \ast ( \A_{i_k::}\ast\X^k - \B_{i_k::} )_+}{\|\A_{i_k::}\|_F^2}\right\|_F^2\\
& \le \normF{\X^k-\projF{\X^k}}^2+t_{i_k}^2 \frac{\normF{\A_{i_k::}^T\ast (\A_{i_k::}\ast\X^k-\B_{i_k::})_+}^2}{\normF{\A_{i_k::}}^4}\\
&\quad -\frac{2t_{i_k}}{\normF{\A_{i_k::}}^2}\left<\X^k-\projF{\X^k},~\A_{i_k::}^T\ast ( \A_{i_k::}\ast\X^k - \B_{i_k::})_+\right>\\
&\le  \normF{\X^k-\projF{\X^k}}^2\\
&\quad -2t_{i_k}(1-\frac{t_{i_k}}{2\normF{\A_{i_k::}}^2}\max_{1\le j \le n}\norm{\fft{\A_{i_k::}}_j}^2) 
\frac{\normF{(\A_{i_k::}\ast\X^k-\B_{i_k::})_+}^2}{\normF{\A_{i_k::}}^2}\\
&=  \normF{\X^k-\projF{\X^k}}^2\\
&\quad -2t_{i_k}(1-\frac{t_{i_k}}{2\normF{\A_{i_k::}}^2}\max_{1\le j \le n}\normF{\fft{\A_{i_k::}}_j}^2) 
\frac{\normF{(\A_{i_k::}\ast\X^k-\B_{i_k::})_+}^2}{\normF{\A_{i_k::}}^2},
\end{align*}
where $\fft{\A_{i_k::}}_j$ is the $j$-th frontal slice of $\fft{\A_{i_k::}}$.

Similarly, if $i_k\in I_=$, then
\[\hspace{-1em}
\normF{\X^\kp1-\projF{\X^\kp1}}^2 \le \normF{\X^k-\projF{\X^k}}^2- 2t_{i_k}(1-\frac{t_{i_k}}{2\normF{\A_{i_k::}}^2}\max_{1\le j \le n}\normF{\fft{\A_{i_k::}}_j}^2) \frac{\normF{\A_{i_k::}\X^k-\B_{i_k::}}^2}{\normF{\A_{i_k::}}^2}.
\]
Taking the expectation with respect to the specified probability distribution gives
\[
\expect [d(\X^\kp1,\FR)^2 | \X^k] \le d(\X^k,\FR)^2-\min_{1\le i \le m}2t_i(1-\frac{t_{i}}{2\normF{\A_{i::}}^2}\max_{1\le j \le n}\normF{\fft{\A_{i::}}_j}^2) \frac{\normF{c_T(\A\ast\X^k-\B)}^2}{\normF{\A}^2}.
\]
It follows from Theorem~\ref{thm:dist} that, there exists a constant $\gamma>0$ such that
\[
\expect [d(\X^\kp1,\FR)^2 | \X^k] \le \left(1-\min_{1\le i \le m}2t_i\left(1-\frac{t_{i}\max_{1\le j \le n}\normF{\fft{\A_{i::}}_j}^2}{2\normF{\A_{i::}}^2}\right) \frac{1}{\gamma^2\normF{\A}^2}\right)d(\X^k,\FR)^2.
\]
\end{proof}

{\bf Implications for convergence.} The above theorem indicates that the TRK-L method converges linearly in expectation if the step sizes $\Set{t_i}$ are subject to the following upper bounds:
\[
t_i < 2\normF{\A_{i::}}^2/(\max_{1\le j\le n}{\normF{\fft{A_{i::}}_j}^2})
\]
for each $i\in\Set{1,\cdots,m}.$ To take a closer look at these upper bounds, we note that, by (\ref{eq:Fnorm}),
\[
\normF{\A_{i::}}^2 = \frac{1}{n}\normF{\fft{\A_{i::}}}^2 = \frac{1}{n}\sum_{j=1}^n\normF{(\fft{\A_{i::}})_j}^2.
\]
Therefore, the upper bounds on $\Set{t_i}$ satisfy 
\[
\frac{2}{n}\le2\normF{\A_{i::}}^2/(\max_{1\le j\le n}{\normF{\fft{\A_{i::}}_j}^2})\le 2.
\]
The lower bound $\frac{2}{n}$ is achieved when $\A_{i::}$ has only one nonzero frontal slice, and the upper bound $2$ is achieved if
the norms of the frontal slices $\normF{\fft{A_{i::}}_j}$ are constant for all $j$.

\section{A comparison of TRK-L and B-MRK}\label{sec:comp}
In this section, we draw connections between the TRK-L method proposed in Section~\ref{sec:trkL} and the B-MRK method proposed in Section~\ref{sec:b-mrk}.
By applying the $\unfold{\cdot}$ operator on both sides of system (\ref{eq:LC}), the tensor system can be equivalently rewritten in the matrix form:
\begin{equation}\label{eq:bcircLC}
\begin{cases}
\bcirc{\A_{i::}}\unfold{\X}\le\unfold{\B_{i::}}& (i \in I_\le) \\
\bcirc{\A_{i::}}\unfold{\X}=\unfold{\B_{i::}}& (i \in I_=).
\end{cases}
\end{equation}
The updates in Algorithm~\ref{alg:trklc} may also be rewritten in matrix forms:
\begin{equation}\label{eq:update1}
\unfold{\X^\kp1} = \unfold{\X^k}-\alpha_{i_k}\bcirc{\A_{i_k::}}^T
\unfold{(\A_{i_k::}\ast\X^k-\B_{i_k::})_+}
\end{equation}
for $i_k\in I_\le$, and
\begin{equation}\label{eq:update2}
\unfold{\X^\kp1} = \unfold{\X^k}-\alpha_{i_k}\bcirc{\A_{i_k::}}^T
\unfold{\A_{i_k::}\ast\X^k-\B_{i_k::}}
\end{equation}
for $i_k\in I_=$,
with 
\begin{equation}\label{eq:alpha1}
\alpha_{i_k}=\frac{t_{i_k}}{\normF{\unfold{\A_{i_k::}}}^2}=\frac{t_{i_k}}{\normF{\A_{i_k::}}^2}<\frac{2}{(\max_{1\le j\le n}{\normF{\fft{\A_{i_k::}}_j}^2})}.
\end{equation}

Now for each $i\in\Set{1,\cdots,m}$, define $\tau_i$ to be the set of row indices associated with the rows of $\bcirc{\A_{i::}}$ in $\bcirc{\A}$.
Applying Algorithm~\ref{alg:block_mrk} to system (\ref{eq:bcircLC}), a random block $\tau =\tau_{i_k}$ is selected at each iteration $k$, and the updating rules are given by
\begin{equation}\label{eq:update3}
\unfold{\X^\kp1} = \unfold{\X^k}-\tilde\alpha_{i_k}\bcirc{\A_{i_k::}}^T
\unfold{(\A_{i_k::}\ast\X^k-\B_{i_k::})_+}
\end{equation}
for $i_k\in I_\le$, and
\begin{equation}\label{eq:update4}
\unfold{\X^\kp1} = \unfold{\X^k}-\tilde\alpha_{i_k}\bcirc{\A_{i_k::}}^T
\unfold{\A_{i_k::}\ast\X^k-\B_{i_k::}}
\end{equation}
for $i_k\in I_=$,
with 
\begin{equation}\label{eq:alpha2}
\tilde\alpha_{i_k} = \frac{t_{\tau}}{{\normF{\bcirc{\A_{i_k::}}}^2}}=\frac{t_{\tau}}{{n^4\normF{\A_{i_k::}}^2}}<\frac{2}{{n^4\normF{\A_{i_k::}}^2}}.
\end{equation}
The updating rules (\ref{eq:update1})-(\ref{eq:update2}) of TRK-L in matrix forms and the updating rules (\ref{eq:update3})-(\ref{eq:update4}) of B-MRK differ 
by the coefficients $\alpha_{i_k}$ and $\tilde\alpha_{i_k}$.
The upper bound on $\alpha_{i_k}$ given by (\ref{eq:alpha1}) satisfy that 
\[
\frac{2}{(\max_{1\le j\le n}{\normF{\fft{\A_{i_k::}}_j}^2})}\ge\frac{2}{\normF{\fft{\A_{i_k::}}}^2}=\frac{2}{n\normF{\A_{i_k::}}^2}.
\]
Therefore, $\alpha_{i_k}$ has a looser upper bound than  $\tilde\alpha_{i_k}$ for all $k.$

We now compare the two methods' rates of convergence. For TRK-L, the rate of convergence is given in Theorem~\ref{thm:tRKconverge}. By choosing the step size 
$t_i=\normF{\A_{i::}}^2/(\max_{1\le j\le n}{\normF{\fft{\A_{i::}}_j}^2})$ for each $i$ in (\ref{eq:tRKconverge}), we obtain
\begin{align}\label{eq:tub}
\expect [d(\X^\kp1,\FR)^2 | \X^k] \le & \left(1-\min_{1\le i\le m}\frac{\normF{\A_{i::}}^2}{\max_{1\le j \le n}\normF{\fft{\A_{i::}}_j}^2}\frac{1}{\gamma^2\normF{\A}^2}\right)d(\X^k,\FR)^2\nonumber\\ 
\le & \left(1-\frac{1}{n\gamma^2\normF{\A}^2}\right)d(\X^k,\FR)^2,
\end{align}
where $\gamma$ is the generalized Hoffman constant given by (\ref{eq:hoffman}).
For B-MRK, the rate of convergence is shown in Theorem~\ref{thm:tRKconverge}. Applying B-MRK to the system (\ref{eq:bcircLC}), 
the theoretical upper bound of the expected error in (\ref{eq:mRKconverge}) satisfies
\begin{align}\label{eq:mub}
\Bigg(1-\min_{\tau} & 2 t_\tau \left(1-\frac{t_\tau}{2}\right) \frac{1}{L^2\normF{\bcirc{\A}}^2}\Bigg) d(\unfold{\X^k},\FR')^2\nonumber\\
&\ge\left(1-\frac{1}{L^2\normF{\bcirc{\A}}^2}\right)d(\X^k,\FR)^2
=\left(1-\frac{1}{n^4L^2\normF{\A}^2}\right)d(\X^k,\FR)^2,
\end{align}
where $\FR'$ is the feasible region defined by (\ref{eq:bcircLC}). Since the tensor system (\ref{eq:LC}) is equivalent to the matrix system (\ref{eq:bcircLC}),
the Hoffman constant $L$ for (\ref{eq:bcircLC}) should be identical to the generalized Hoffman constant $\gamma$ for (\ref{eq:LC}).
Hence, the theoretical upper bound of the expected error of TRK-L derived in (\ref{eq:tub})
is smaller than that of B-MRK given in (\ref{eq:mub}). This implies that, with suitable choices of the step sizes, TRK-L provides a tighter theoretical guarantee than that of B-MRK. 
We will further compare TRK-L and B-MRK by numerical experiments in Section~\ref{sec:exp}.

\section{A TRK method for Linear Equality and Bound Constraints}\label{sec:trklb}
In this section, we consider problems of the form
\begin{equation}\label{eq:nC}
\A\ast\X = \B, ~\X \le \tilde\B,\\
\end{equation}
where $\A\in \mathbb{R}^{m\times l \times n}$, $\B \in \mathbb{R}^{m \times p \times n}$ and 
$\tilde\B\in\mathbb{R}^{l \times p \times n}$.
The system (\ref{eq:nC}) is a special case of the general form given by (\ref{eq:LC}). 
Therefore, it can be solved using the TRK-L method proposed previously in Section~\ref{sec:trkL}. 
However, considering that the orthogonal projection of a point $\X$ onto the half space $\Set{\X\le \tilde\B}$
can be easily computed as $\min\{\X,\tilde\B\}$, we propose another variant of the TRK method, TRK-LB, specifically 
tailored for solving problems of form (\ref{eq:nC}). 

The TRK-LB method is summarized in Algorithm~\ref{alg:trk_lb}. The method solves (\ref{eq:nC}) in an alternating manner.  First, the method projects towards the solution set of the tensor equation given by the sampled row slice of $\A$, thus progressing towards the solution set of $\A \ast \X = \B$.  Then, the method orthogonally projects this iterate onto the set given by $\Set{\X\le \tilde\B}$, 
ensuring this newly updated iterate satisfies the bound constraint.

\begin{algorithm}[!htp]
\caption{TRK-LB for tensor linear equality and bound constraints}\label{alg:trknc}
\begin{algorithmic}[1]
\State \textbf{Input:} \begin{itemize}
\item $\mathcal X^0 \in \mathbb{R}^{l\times p \times n}$, $\mathcal A\in \mathbb{R}^{m\times l \times n}$, $\mathcal{B} \in \mathbb{R}^{m \times p \times n}$, $\tilde\B \in \mathbb{R}^{l\times p \times n}$, 
\item probabilities $p_i = \|\mathcal A_{i::}\|_F^2/\|\mathcal A\|_F^2$ and 
\item step sizes 
 $t_i<2\normF{\A_{i::}}^2/(\max_{1\le j\le n}{\normF{\fft{A_{i::}}_j}^2})$ for $i=1,2,\cdots,m$
 \end{itemize}
\For{$k = 0,1,2,\dots$}
    \State Sample $i_k \sim \Set{p_i}$
    \State $\Z^k = \mathcal{X}^k - t_{i_k}\mathcal{A}_{i_k::}^T \ast \left( \mathcal{A}_{i_k::}\ast\mathcal{X}^k - \mathcal{B}_{i_k::} \right)
    /\|\mathcal A_{i_k::}\|_F^2$
    \State $\X^{k+1} = \min\{\Z^k, \tilde\B\}$
\EndFor
\State \textbf{Output:} last iterate $\mathcal{X}^{k+1}$
\end{algorithmic}
\label{alg:trk_lb}
\end{algorithm}

We show that TRK-LB converges linearly in expectation to the feasible region $\FR.$
\begin{thm}\label{thm:trkLBconverge}
Suppose the system (\ref{eq:nC}) has a nonempty feasible region $\FR$. Algorithm~\ref{alg:trknc} generates a sequence of iterations $\Set{\X^k}_{k\ge 0}$ 
that converges linearly in expectation:
\[
\expect [d(\X^\kp1,\FR)^2 | \X^k] \le \left(1-\min_{1\le i \le m}2t_i\left(1-\frac{t_{i}\max_{1\le j \le n}\normF{\fft{\A_{i::}}_j}^2}{2\normF{\A_{i::}}^2}\right) \frac{1}{\gamma^2\normF{\A}^2}\right)d(\X^k,\FR)^2
\]
for each $k$, where $\fft{\cdot}$ is the DFT given by (\ref{eq:fft}),
and $\gamma>0$ is the smallest constant such that (\ref{eq:hoffman}) holds. 
\end{thm}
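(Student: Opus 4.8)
The plan is to reduce the analysis to the equality-constraint case already established in Theorem~\ref{thm:tRKconverge}, exploiting the fact that the second step of Algorithm~\ref{alg:trknc} is an orthogonal projection onto a convex set that \emph{contains} $\FR$. I would proceed in three stages: first peel off the truncation step using non-expansiveness; then bound the intermediate iterate $\Z^k$ verbatim as in the equality case of Theorem~\ref{thm:tRKconverge}; and finally close the recursion with the generalized Hoffman bound of Theorem~\ref{thm:dist}, after observing that every iterate satisfies the bound constraint.

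First I would fix $k$ and set $\Y := \projF{\X^k}$, so $\Y \in \FR$ and in particular $\Y \le \tilde\B$. The set $C := \Set{\Z : \Z \le \tilde\B}$ is closed and convex, and the map $\Z \mapsto \min\{\Z, \tilde\B\}$ is exactly the orthogonal projection $\proj{C}$ onto $C$. Since orthogonal projections onto convex sets are non-expansive in $\normF{\cdot}$ and $\proj{C}(\Y) = \Y$, I obtain
\[
d(\X^\kp1, \FR) \le \normF{\X^\kp1 - \Y} = \normF{\proj{C}(\Z^k) - \proj{C}(\Y)} \le \normF{\Z^k - \Y}.
\]
This is the crucial observation: because $\FR \subseteq C$, the truncation step can only move the iterate closer to the feasible point $\Y$, so it suffices to control $\normF{\Z^k - \Y}^2$.

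Next, since $\Z^k$ is precisely the equality-constraint update of Algorithm~\ref{alg:trklc} applied to the sampled row slice $\A_{i_k::}$, I would reuse the chain of inequalities from the $i_k \in I_=$ case of Theorem~\ref{thm:tRKconverge} (expanding the square, using the adjoint identity $\dotP{\A\ast\X}{\B} = \dotP{\X}{\A^T\ast\B}$ and Lemma~\ref{lem:productNorm} to bound the quadratic term) to get
\[
\normF{\Z^k - \Y}^2 \le \normF{\X^k - \Y}^2 - 2t_{i_k}\Bigl(1 - \frac{t_{i_k}\max_{1\le j\le n}\normF{\fft{\A_{i_k::}}_j}^2}{2\normF{\A_{i_k::}}^2}\Bigr)\frac{\normF{\A_{i_k::}\ast\X^k - \B_{i_k::}}^2}{\normF{\A_{i_k::}}^2}.
\]
Taking the expectation over $i_k \sim \Set{p_i}$ with $p_i = \normF{\A_{i::}}^2/\normF{\A}^2$ telescopes the per-slice residuals into $\normF{\A\ast\X^k - \B}^2/\normF{\A}^2$ by separability in the first dimension, and $\normF{\X^k - \Y}^2 = d(\X^k, \FR)^2$.

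Finally I would invoke Theorem~\ref{thm:dist}, and this is where I expect the one genuine subtlety. The distance $d(\X^k,\FR)$ is measured to the \emph{full} feasible region of \eqref{eq:nC}, yet the residual surviving the expectation involves only the equality part $\A\ast\X^k - \B$, not the bound constraint; a naive application of the Hoffman bound would leave an uncontrolled bound-residual term. The resolution is that every iterate with $k \ge 1$ satisfies $\X^k = \min\{\Z^{k-1}, \tilde\B\} \le \tilde\B$, so the bound constraints are already satisfied; consequently $c_T$ of the full residual of \eqref{eq:nC} at $\X^k$ collapses to the equality residual alone, and Theorem~\ref{thm:dist} yields $d(\X^k,\FR) \le \gamma\normF{\A\ast\X^k - \B}$ (for $k=0$ one either assumes $\X^0 \le \tilde\B$ or simply applies the recursion from $k \ge 1$). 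Substituting $\normF{\A\ast\X^k - \B}^2 \ge \gamma^{-2} d(\X^k,\FR)^2$ into the expectation bound produces exactly the claimed contraction factor.
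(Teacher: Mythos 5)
Your proposal is correct and follows essentially the same route as the paper: bound the equality-projection step exactly as in the $i_k\in I_=$ case of Theorem~\ref{thm:tRKconverge}, show that the truncation $\min\{\cdot,\tilde\B\}$ cannot increase the distance to $\FR$ (the paper verifies this by expanding $\normF{\Z^k-(\Z^k-\tilde\B)_+-\projF{\Z^k}}^2$ and using $\projF{\Z^k}\le\tilde\B$, which is just a hands-on proof of the non-expansiveness of $\proj{C}$ that you invoke abstractly), and close with Theorem~\ref{thm:dist}. Your remark that one must use $\X^k\le\tilde\B$ (valid for $k\ge 1$, or by assuming $\X^0\le\tilde\B$) so that the bound-constraint part of the residual vanishes before the Hoffman inequality is applied addresses a step the paper passes over silently with ``similarly as in the proof of Theorem~\ref{thm:tRKconverge}''; making it explicit is a genuine improvement in rigor.
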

\begin{proof}
Let $\projF{\cdot}$ denote the projection onto the feasible region $\FR$. Then
\begin{align*}
\normF{\Z^k-\projF{\Z^k}}^2  &\le \normF{\Z^k-\projF{\X^k}}^2\\
& \le \left\|\Z^k-\projF{\X^k}-t_{i_k}\frac{\A_{i_k::}^T \ast ( \A_{i_k::}\ast\X^k - \B_{i_k::} )}{\|\A_{i_k::}\|_F^2}\right\|_F^2\\
& \le \normF{\X^k-\projF{\X^k}}^2+t_{i_k}^2 \frac{\normF{\A_{i_k::}^T\ast (\A_{i_k::}\ast\X^k-\B_{i_k::})}^2}{\normF{\A_{i_k::}}^4}\\
&\quad -\frac{2t_{i_k}}{\normF{\A_{i_k::}}^2}\left<\X^k-\projF{\X^k},~\A_{i_k::}^T\ast ( \A_{i_k::}\ast\X^k - \B_{i_k::})\right>\\
&\le  \normF{\X^k-\projF{\X^k}}^2\\
&\quad -2t_{i_k}(1-\frac{t_{i_k}}{2\normF{\A_{i_k::}}^2}\max_{1\le j \le n}\normF{\fft{\A_{i_k::}}_j}^2) 
\frac{\normF{(\A_{i_k::}\ast\X^k-\B_{i_k::})}^2}{\normF{\A_{i_k::}}^2},
\end{align*}
where $\fft{\A_{i_k::}}_j$ is the $j$-th frontal slice of $\fft{\A_{i_k::}}$.
Moreover,
\begin{align*}
\normF{\X^\kp1-\projF{\X^\kp1}}^2 &\le \normF{\X^\kp1-\projF{\Z^k}}^2\\
&\le \normF{\Z^k-(\Z^k-\tilde\B)_+ -\projF{\Z^k}}^2\\
& = \normF{\Z^k-\projF{\Z^k}}^2+\normF{(\Z^k-\tilde\B)_+}^2-2\left<(\Z^k-\tilde\B)_+, \Z^k-\projF{\Z^k}\right>\\
& \le \normF{\Z^k-\projF{\Z^k}}^2+\normF{(\Z^k-\tilde\B)_+}^2-2\left<(\Z^k-\tilde\B)_+, \Z^k-\tilde\B\right>\\
& = \normF{\Z^k-\projF{\Z^k}}^2-\normF{(\Z^k-\tilde\B)_+}^2\\
&\le \normF{\Z^k-\projF{\Z^k}}^2.
\end{align*}
Hence,
\begin{align*}
\normF{\X^\kp1-\projF{\X^\kp1}}^2 &\le  \normF{\X^k-\projF{\X^k}}^2\\
&\quad -2t_{i_k}(1-\frac{t_{i_k}}{2\normF{\A_{i_k::}}^2}\max_{1\le j \le n}\normF{\fft{\A_{i_k::}}_j}^2) 
\frac{\normF{(\A_{i_k::}\ast\X^k-\B_{i_k::})}^2}{\normF{\A_{i_k::}}^2}.
\end{align*}
Similarly as in the proof of Theorem~\ref{thm:tRKconverge}, we then take the conditional expectation and obtain that
\[
\expect [d(\X^\kp1,\FR)^2 | \X^k] \le \left(1-\min_{1\le i \le m}2t_i\left(1-\frac{t_{i}\max_{1\le j \le n}\normF{\fft{\A_{i::}}_j}^2}{2\normF{\A_{i::}}^2}\right) \frac{1}{\gamma^2\normF{\A}^2}\right)d(\X^k,\FR)^2.
\]
\end{proof}
    
\section{Experiments}\label{sec:exp}
In this section, we present numerical experiments on the B-MRK method in Algorithm~\ref{alg:block_mrk}, the TRK-L method in Algorithm~\ref{alg:trklc} and 
the TRK-LB method in Algorithm~\ref{alg:trk_lb}.
All the numerical experiments were done using Matlab R2021a on a MacBook Pro with an Apple M2 Max chip, 12 cores, and 32 GB of memory running macOS Ventura, Version 13.2.1.
\subsection{Experiments on the B-MRK method}
We test B-MRK on a randomly generated linear system of form (\ref{eq:mLC}), with $A\in\mathbb R^{1200\times100}$,
$X\in\mathbb R^{100\times 7}$ and $B\in\mathbb R^{1200\times 7}$. The system comprises $500$ equality constraints and $700$
inequality constraints. Entries of $A$ are drawn independently from a standard normal distribution; and to ensure the system has a nonempty feasibility region, we choose 
$B$ by computing $A X$ for a Gaussian random matrix $X$ and then perturbing the rows of $AX$ associated with inequality constraints by adding the absolute values of standard normal random variables to obtain $B$. For each $X$, the \emph{residual error} is defined by
\[
e_M:=\normF{c_M(AX-B)},
\]
where the function $c_M$ is defined in (\ref{eq:cMfunc}). With a fixed block size $|\tau|$, we partition $A$ into
$|\tau|\times n$ blocks consisting of consecutive rows. Figure~\ref{fig:bMRK_steps}  and Figure~\ref{fig:bMRK_sizes} show the results of B-MRK with 
varied block size $|\tau|$ and step size $t$. Although Theorem~\ref{thm:mRKconverge} states that B-MRK is guaranteed to converge when
the step size is less then $2$, we observe from Figure~\ref{fig:bMRK_steps} that a larger step size may lead to more rapid convergence in practice.
The B-MRK method with $|\tau|=1$ and $t=1$ recovers the classic RK method. Figure~\ref{fig:bMRK_sizes} indicates that, by choosing a larger block size 
and a larger step size, B-MRK outperforms the classic RK method.

\begin{figure}[!htbp]
  \centering
  \includegraphics[width=0.4\textwidth]{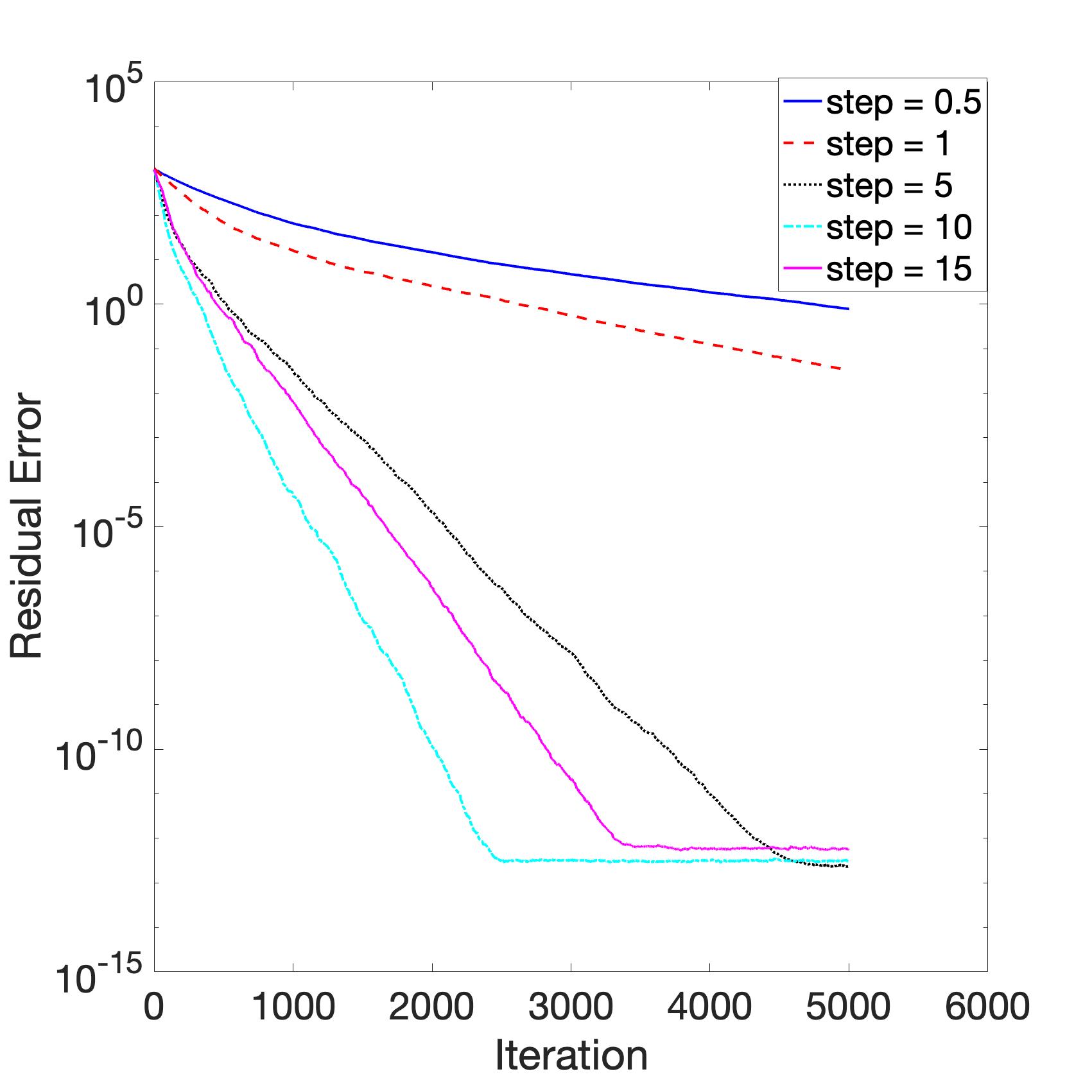}
  \hspace{2pt}
  \includegraphics[width=0.4\textwidth]{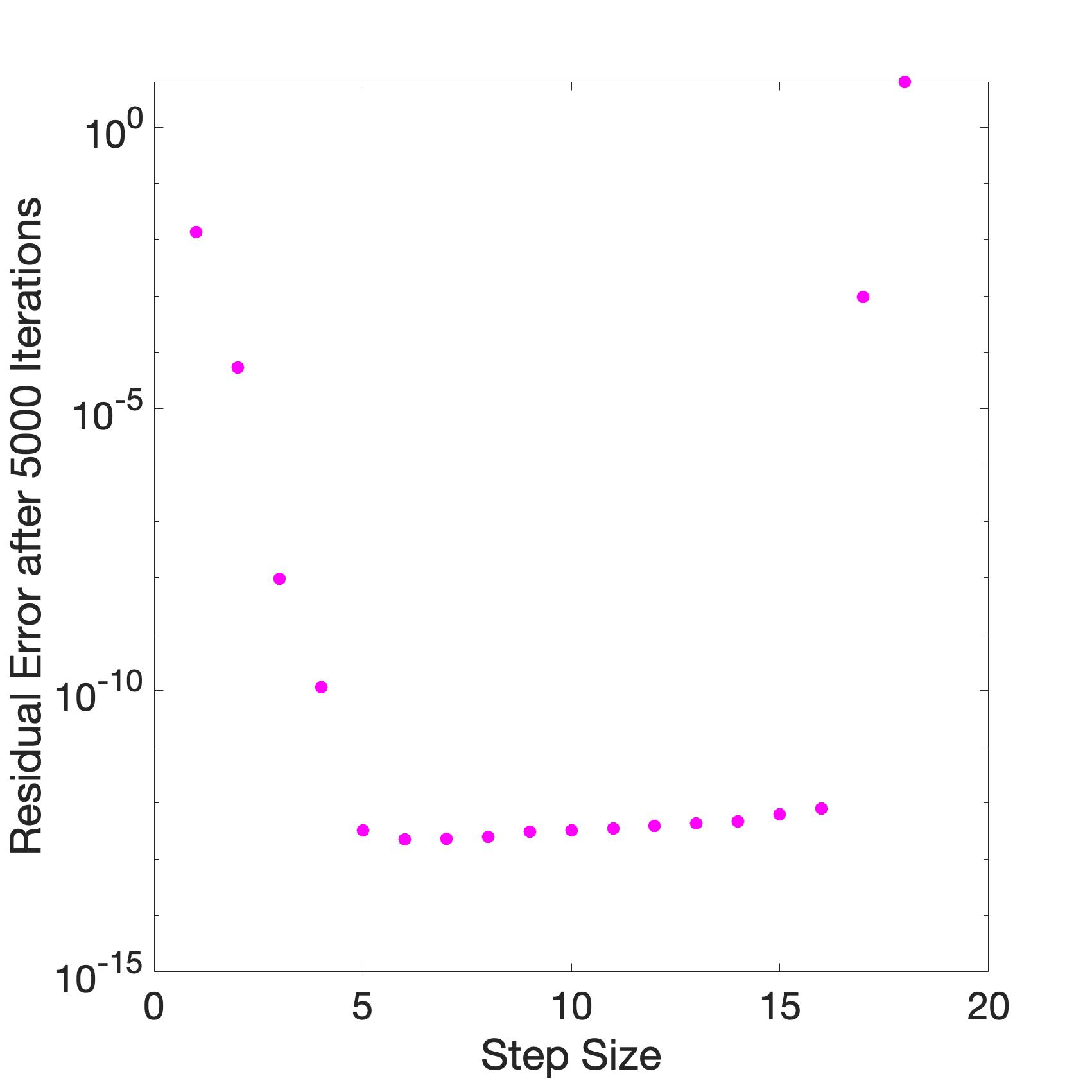}
  \caption{Results of B-MRK for a Gaussian random system with $A\in\mathbb R^{(500+700)\times100}$ and
  $B\in\mathbb R^{(500+700)\times 7}$.
  Left:  Iterations versus residual errors with block size $|\tau|=10$ and various step sizes.
  Right: Step size versus residual errors of B-MRK after 5000 iterations with $|\tau|=10$. }
  \label{fig:bMRK_steps}
\end{figure}

\begin{figure}[!htbp]
  \centering
  \includegraphics[width=0.4\textwidth]{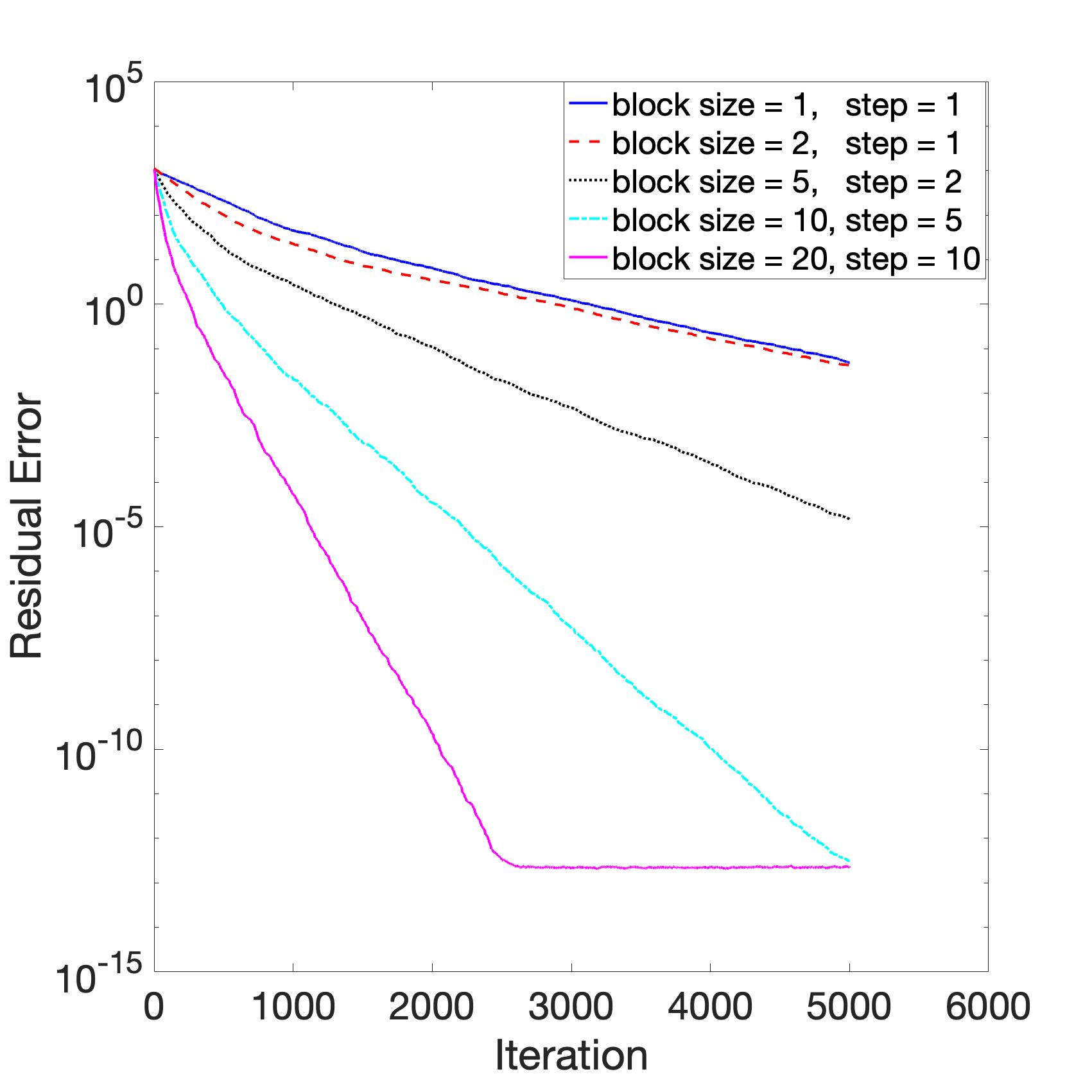}
  \hspace{2pt}
  \includegraphics[width=0.4\textwidth]{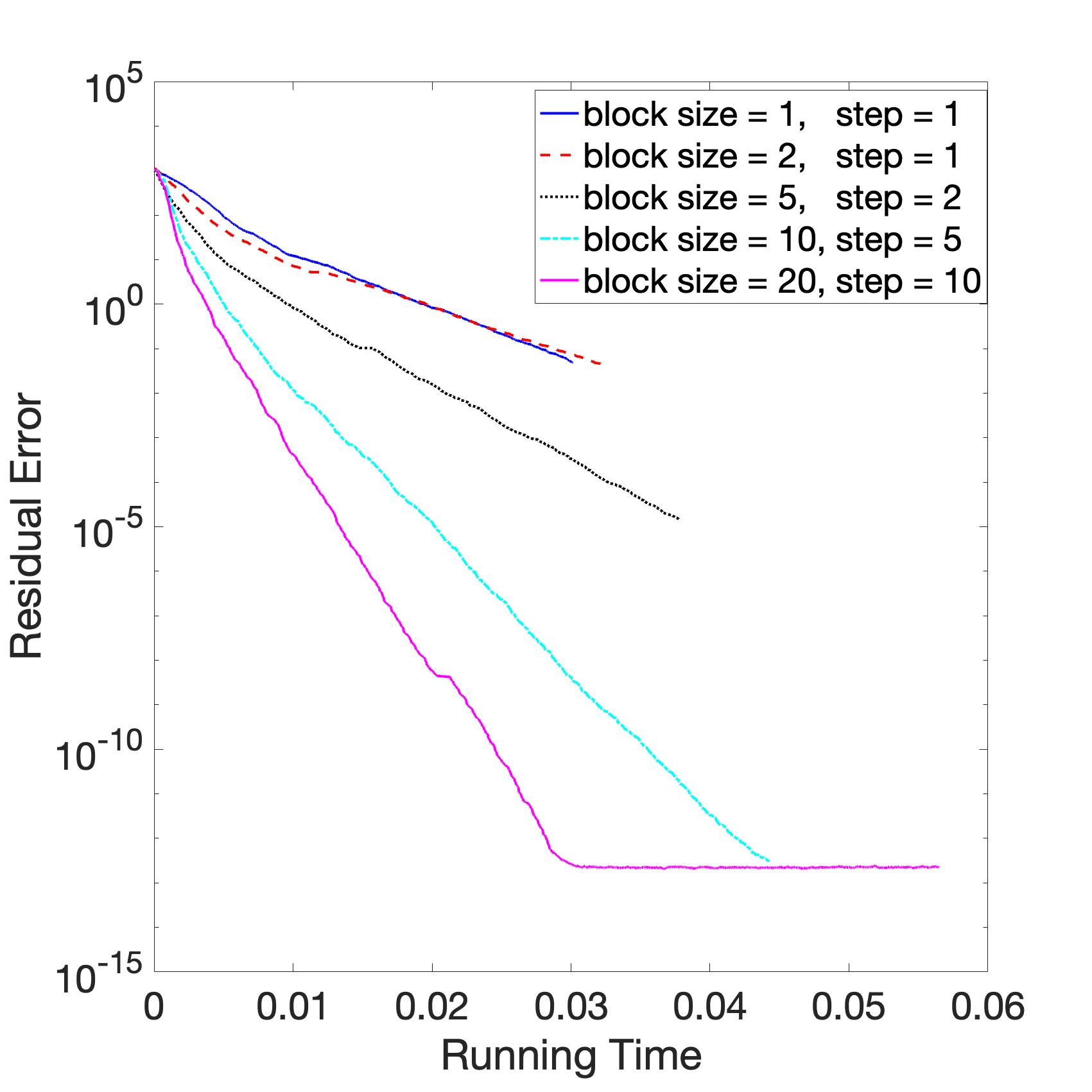}
  \caption{Results of B-MRK for a Gaussian random system with $A\in\mathbb R^{(500+700)\times100}$ and
  $B\in\mathbb R^{(500+700)\times 7}$.
  Left:  Iterations versus residual errors of B-MRK with various combinations of block sizes and step sizes.
  Right:  Running time versus residual errors of B-MRK with various combinations of block sizes and step sizes.}
  \label{fig:bMRK_sizes}
\end{figure}

Additionally, we consider the application of B-MRK in binary classification. 
Let $m$ be the number of data points and $n$ the number of features.
We generate a random data matrix $X_D\in\mathbb R^{m\times n}$ 
and a random weight vector $w^*\in\mathbb R^{n}$.
Entries of $X_D$ and $w^*$ are drawn independently from a standard normal distribution. For each data point $X_D(i,:)$, we
create a label \[
y_i := \textrm{sign}(X_D(i,:)w^*)\in\Set{-1,1}.
\] We then use B-MRK to solve the classification problem \[
Ax\le b,
\]
where $A:=-\textrm{diag}(y)X_D$ and $b:=-10^{-5}e,$ with $e$ being the vector of ones.
Results are shown in Figure~\ref{fig:bMRK_perceptron} for two cases: (i) $m=10000$ and $n=100$; (ii) $m=10000$ and $n=500$.
We observe that the classic RK method with block size $|\tau|=1$ and step size $t=1$ is more advantageous when there 
are a large number of features. However, with suitable choices of the block size and the step size,
the B-MRK method can achieve faster convergence than the classic RK method in both cases.

\begin{figure}[!htbp]
  \centering
  \includegraphics[width=0.4\textwidth]{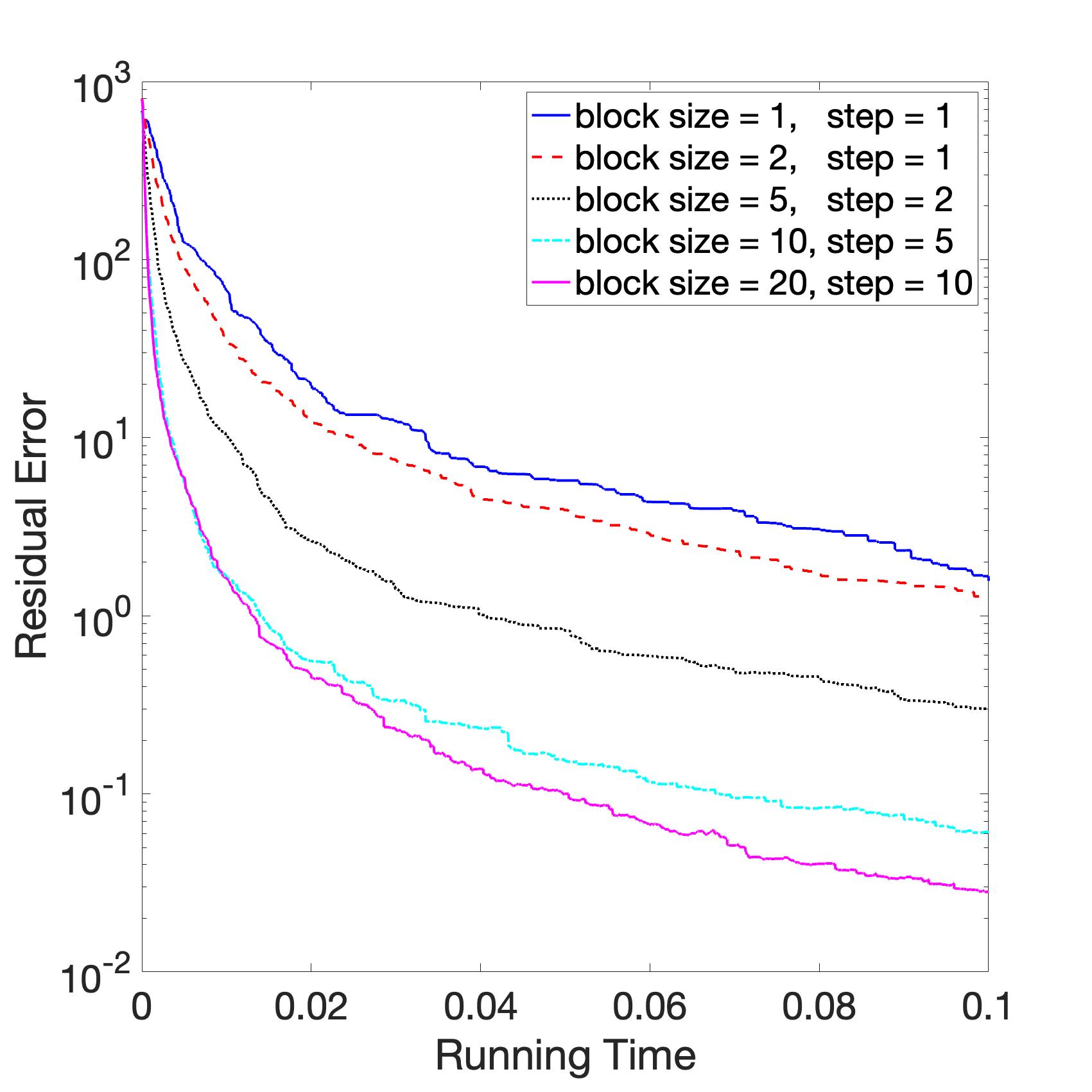}
  \hspace{2pt}
  \includegraphics[width=0.4\textwidth]{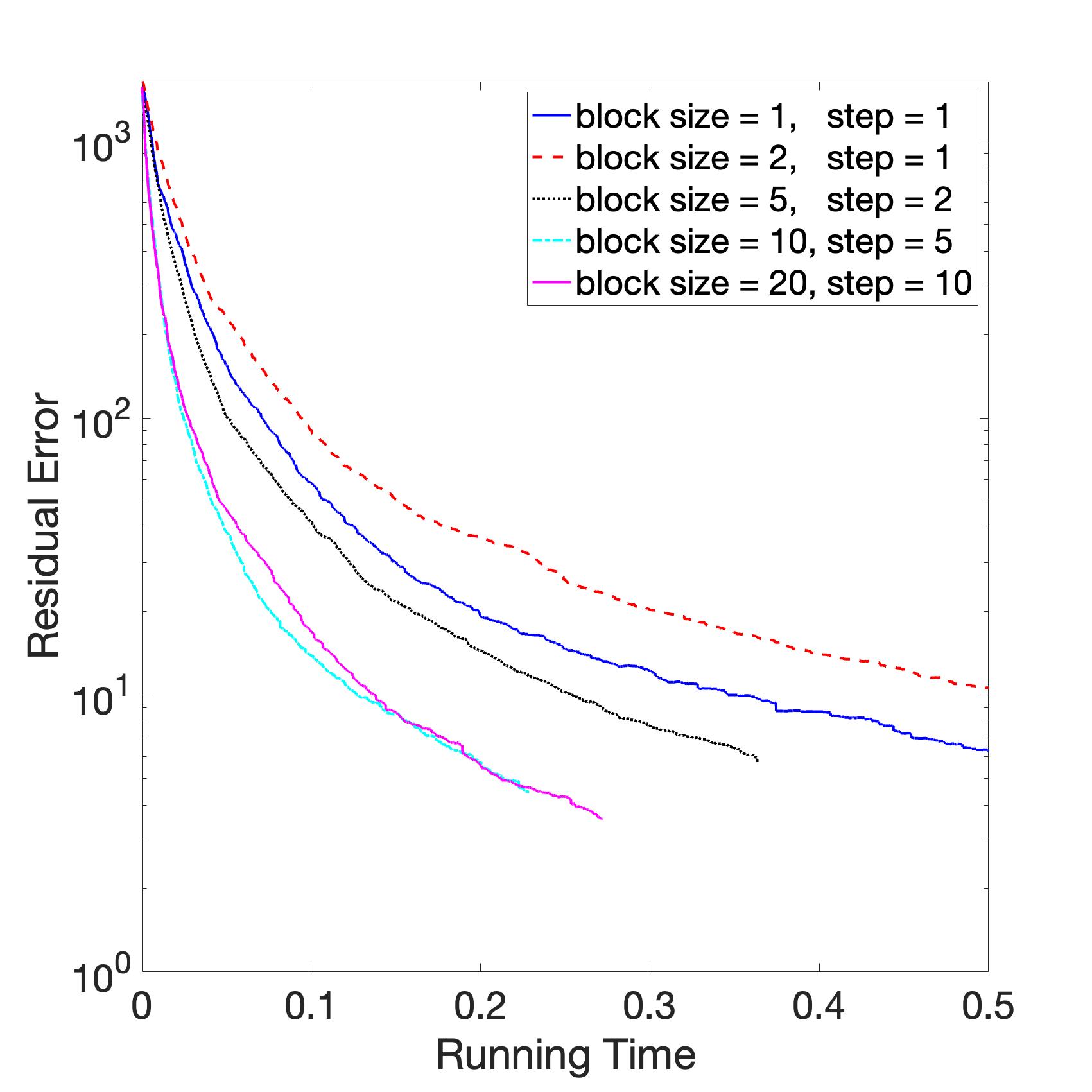}
  \caption{Results of B-MRK with various combinations of block sizes and step sizes for binary classification.
  Left:  Running time versus residual errors for $A\in\mathbb R^{10000\times 100}$.
  Right:  Running time versus residual errors for $A\in\mathbb R^{10000\times 500}$.}
  \label{fig:bMRK_perceptron}
\end{figure}

\subsection{Experiments on the TRK-L method}
We test TRK-L on a randomly generated tensor linear system of form (\ref{eq:LC}), with 
$\A\in\mathbb R^{120\times 50\times 10}$, $\X\in\mathbb R^{50\times 7\times 10}$ and $\mathcal B\in\mathbb R^{120\times 7\times 10}$.
The system comprises $50$ equality constraints and $70$
inequality constraints. Entries of $\A$ are drawn independently from a standard normal distribution; and to ensure the system has a nonempty feasibility region, we choose 
$\B$ by computing $\A \ast\X$ for a Gaussian random tensor $\X$ and then perturbing the row slices of $\A\ast\X$ associated with inequality constraints by adding the absolute values of standard normal random variables to obtain $\B$. For each $\X$, the \emph{residual error} is defined by
\[
e_T:=\normF{c_T(\A\ast\X-\B)},
\]
where the function $c_T$ is defined in (\ref{eq:cTfunc}). Theorem~\ref{thm:tRKconverge} implies that TRK-L is guaranteed to converge if, for each row
index $i,$ letting the step size
\[
t_i = \alpha\normF{\A_{i::}}^2/(\max_{1\le j\le n}{\normF{\fft{\A_{i::}}_j}^2})
\]
for a coefficient $\alpha<2.$
Figure~\ref{fig:tRKL_steps}  shows the results of TRK-L with various step coefficients $\alpha$.
In Figure~\ref{fig:tRKL_vs_bMRK}, we compare the results of applying TRK-L to the tensor system (\ref{eq:LC})
and applying B-MRK to its equivalent matrix system (\ref{eq:bcircLC}).
We observe that TRK-L outperforms B-MRK in this scenario.

\begin{figure}[!htbp]
  \centering
  \includegraphics[width=0.4\textwidth]{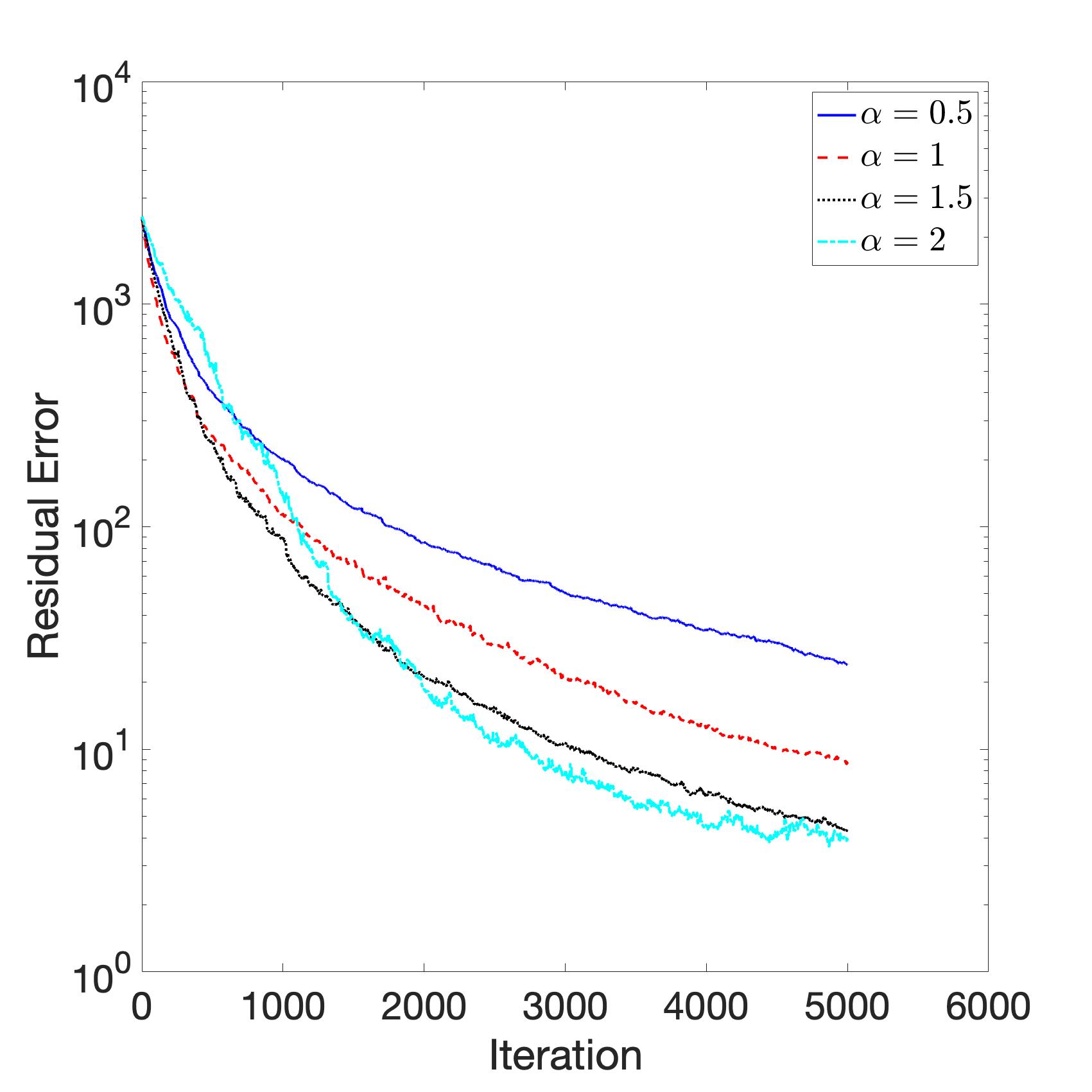}
  \hspace{2pt}
  \includegraphics[width=0.4\textwidth]{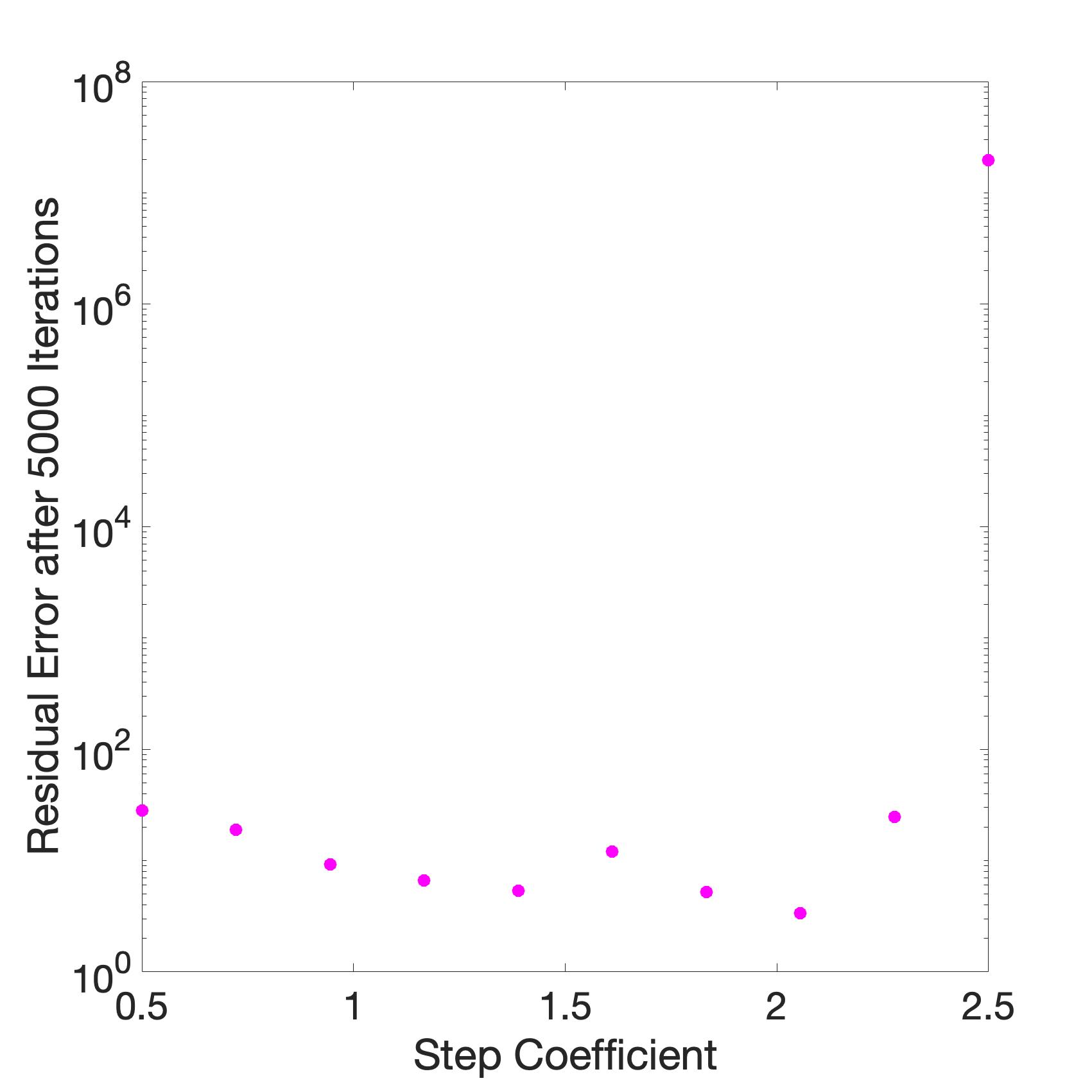}
  \caption{Results of TRK-L for a Gaussian random system with $\A\in\mathbb R^{(50+70)\times 50\times 10}$ and
  $\B\in\mathbb R^{(50+70)\times 7\times 10}$.
  Left:  Iterations versus residual errors with various step coefficients $\alpha$.
  Right: Step coefficient $\alpha$ versus residual errors after 5000 iterations.}
  \label{fig:tRKL_steps}
\end{figure}

\begin{figure}[!htbp]
  \centering
  \includegraphics[width=0.4\textwidth]{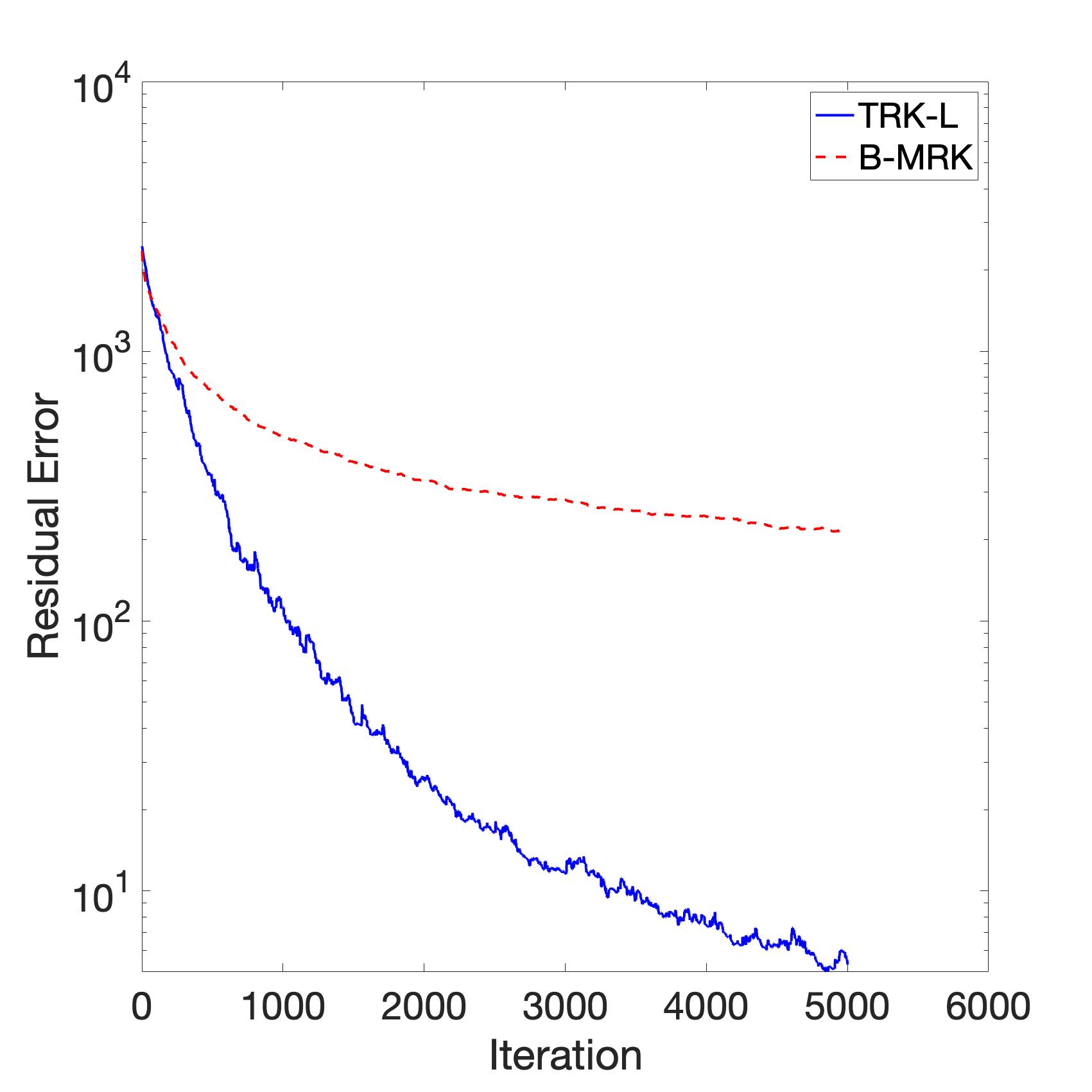}
  \hspace{2pt}
  \includegraphics[width=0.4\textwidth]{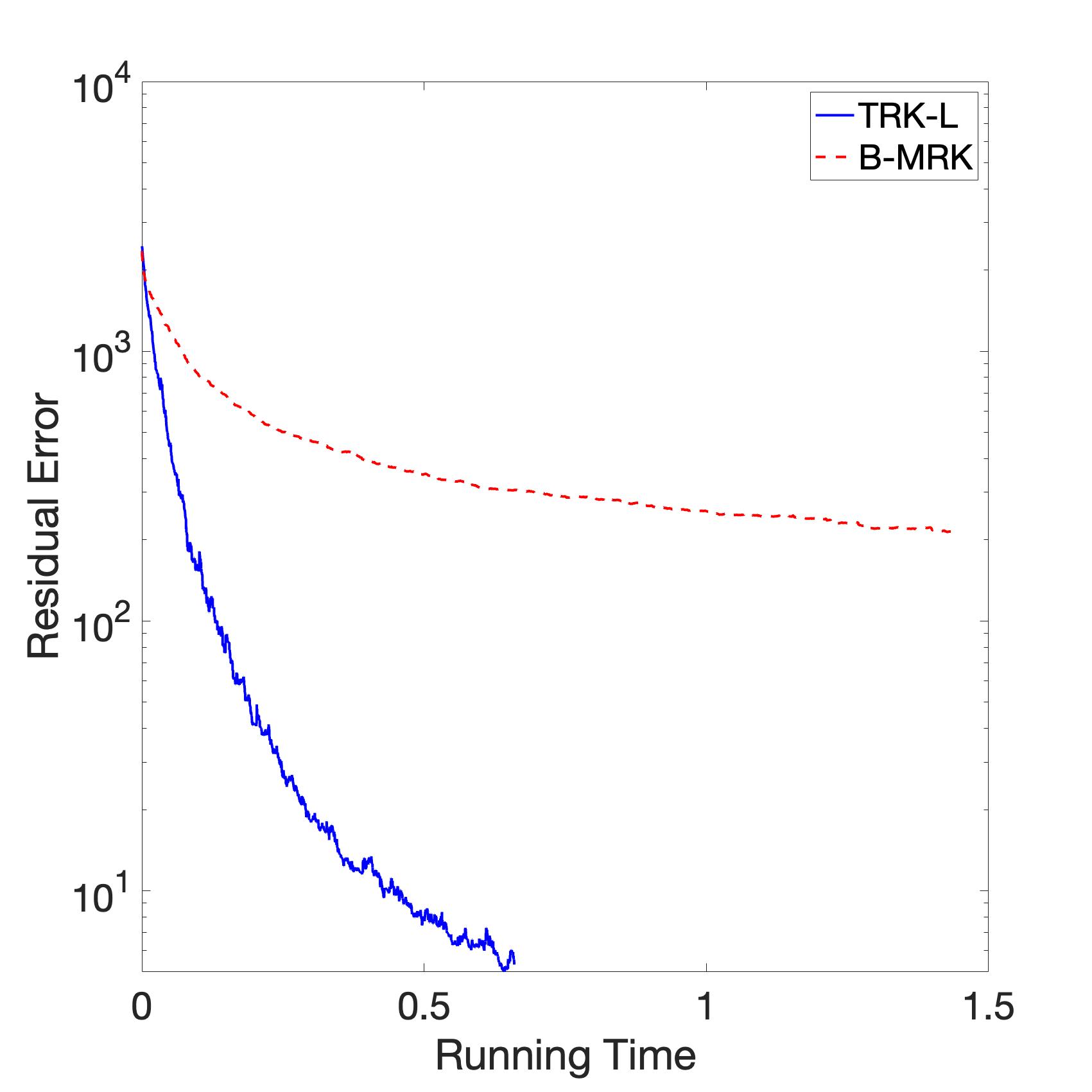}
  \caption{Comparison of TRK-L and B-MRK for a Gaussian random system with $\A\in\mathbb R^{(50+70)\times 50\times 10}$ and
  $\B\in\mathbb R^{(50+70)\times 7\times 10}$.
  Left:  Iterations versus residual errors of TRK-L with step coefficient $\alpha=1.8$ and B-MRK with step size $t=2$.
  Right:  Running time versus residual errors of TRK-L with step coefficients $\alpha=1.8$ and B-MRK with step size $t=2$.}
  \label{fig:tRKL_vs_bMRK}
\end{figure}

\subsection{Experiments on the TRK-LB method}
We test TRK-LB on a randomly generated tensor linear system of the special form (\ref{eq:nC}), with 
$\A\in\mathbb R^{100\times 50\times 10}$, $\mathcal B\in\mathbb R^{100\times 7\times 10}$, $\X\in\mathbb R^{50\times 7\times 10}$ 
and $\tilde\B\in\mathbb R^{50\times 7\times 10}$.
Entries of $\A$ are standard normal random variables; and to ensure the system has a nonempty feasibility region, we choose 
$\B$ to be $\A \ast\X$ for a Gaussian random tensor $\X$ and perturb the $\X$ by adding the absolute values of standard normal random variables to 
obtain $\tilde\B$. For each $\X$, the \emph{residual error} is defined by
\[
\tilde e_T:=\sqrt{\normF{\A\ast\X-\B}^2+\normF{(\X-\tilde\B)_+}^2},
\]
Similarly as TRK-L, Theroem~\ref{thm:trkLBconverge}  implies that TRK-LB is guaranteed to converge if letting
\[
t_i= \alpha\normF{\A_{i::}}^2/(\max_{1\le j\le n}{\normF{\fft{\A_{i::}}_j}^2}),\quad i=1,\cdots,m,
\]
for a coefficient $\alpha<2$.
Figure~\ref{fig:tRKLB_steps}  shows the results of TRK-LB with various step coefficients $\alpha$.
In Figure~\ref{fig:tRKLB_vs_tRKL}, 
We observe that TRK-LB converges more rapidly than TRK-L in this scenario.

\begin{figure}[!htbp]
  \centering
  \includegraphics[width=0.4\textwidth]{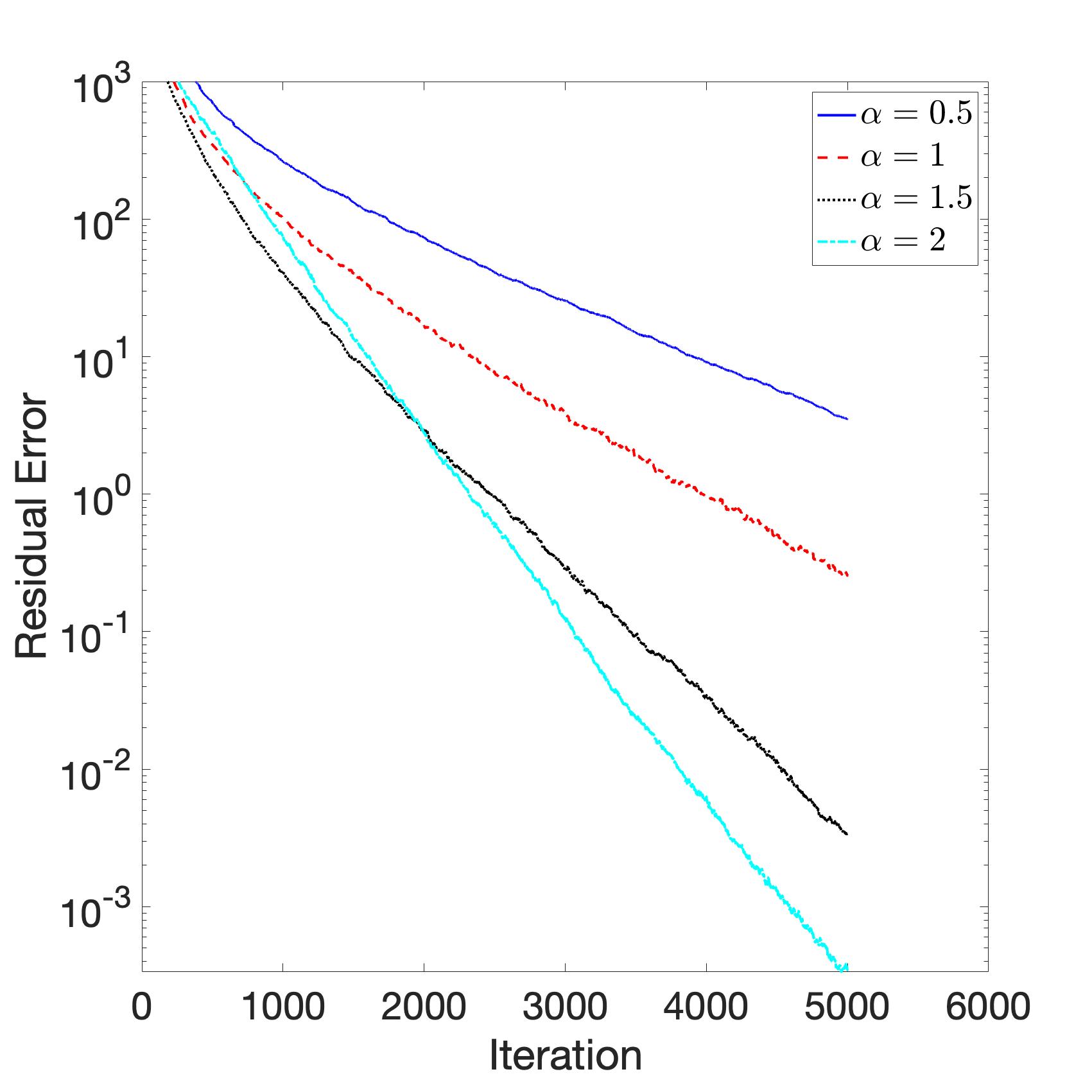}
  \hspace{2pt}
  \includegraphics[width=0.4\textwidth]{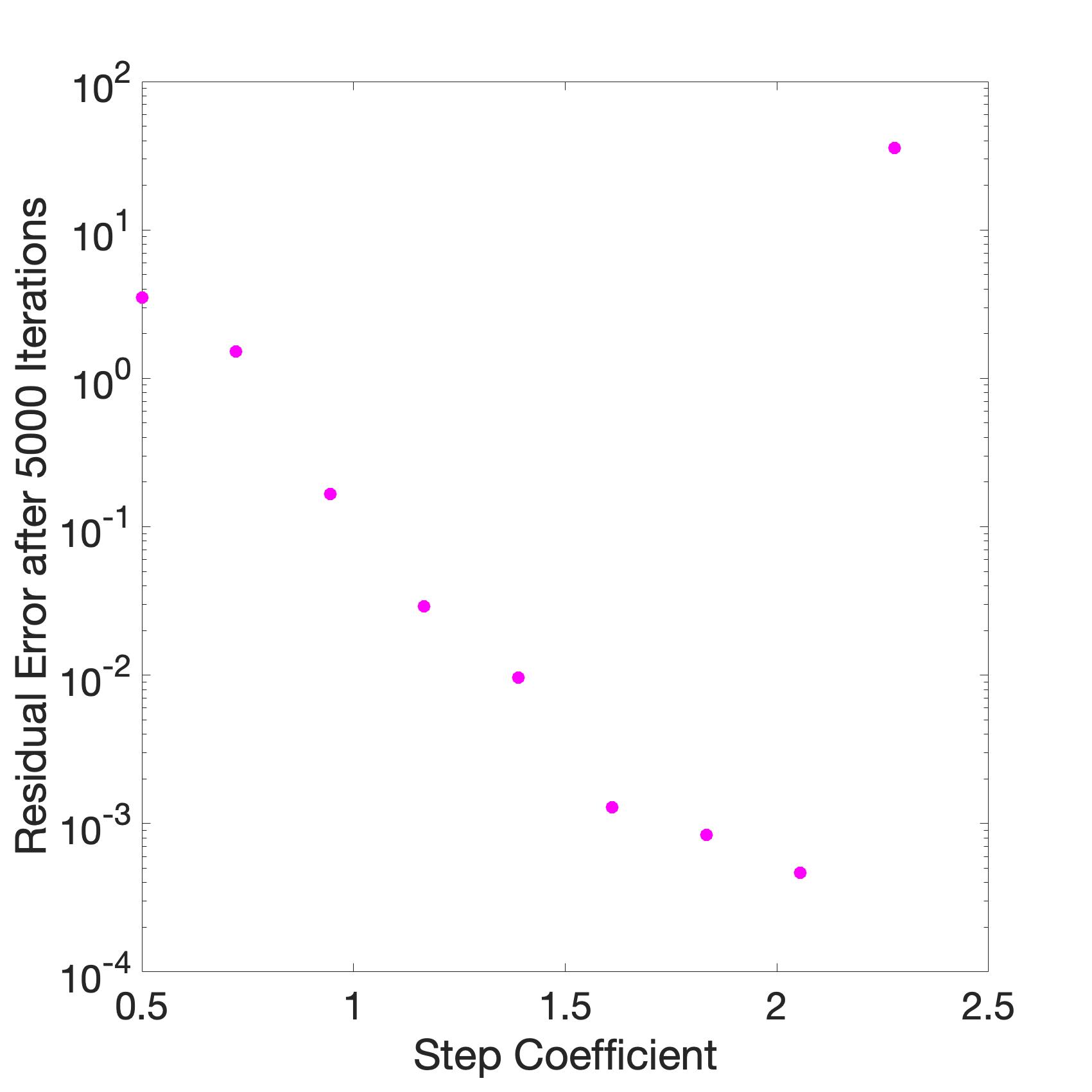}
  \caption{Results of TRK-LB for a Gaussian random system with $\A\in\mathbb R^{100\times 50\times 10}$,
  $\B\in\mathbb R^{100\times 7\times 10}$ and $\tilde \B\in\mathbb R^{50\times 7\times 10}$.
  Left:  Iterations versus residual errors with various step coefficients $\alpha$.
  Right: Step coefficient $\alpha$ versus residual errors after 5000 iterations.}
  \label{fig:tRKLB_steps}
\end{figure}

\begin{figure}[!htbp]
  \centering
  \includegraphics[width=0.4\textwidth]{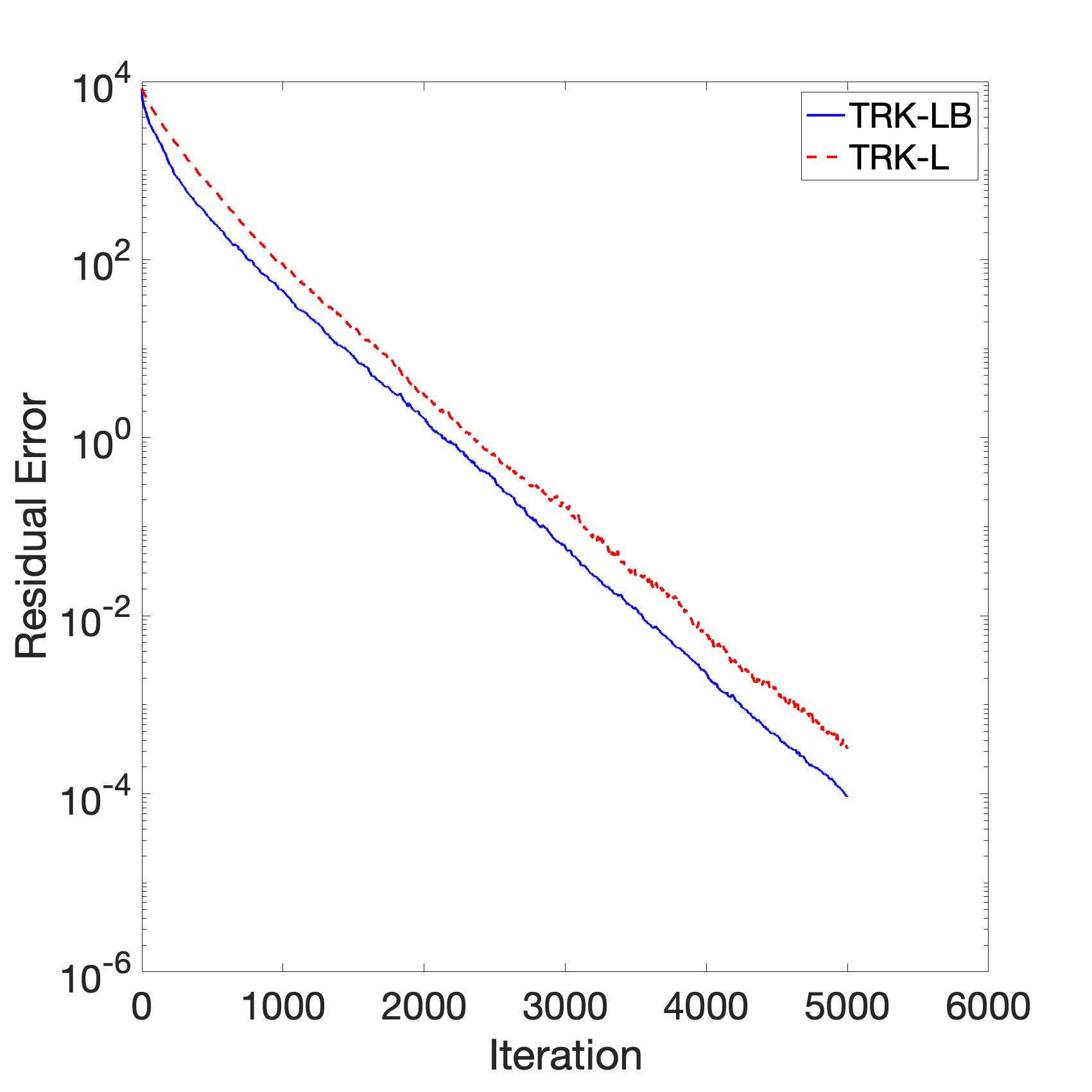}
  \hspace{2pt}
  \includegraphics[width=0.4\textwidth]{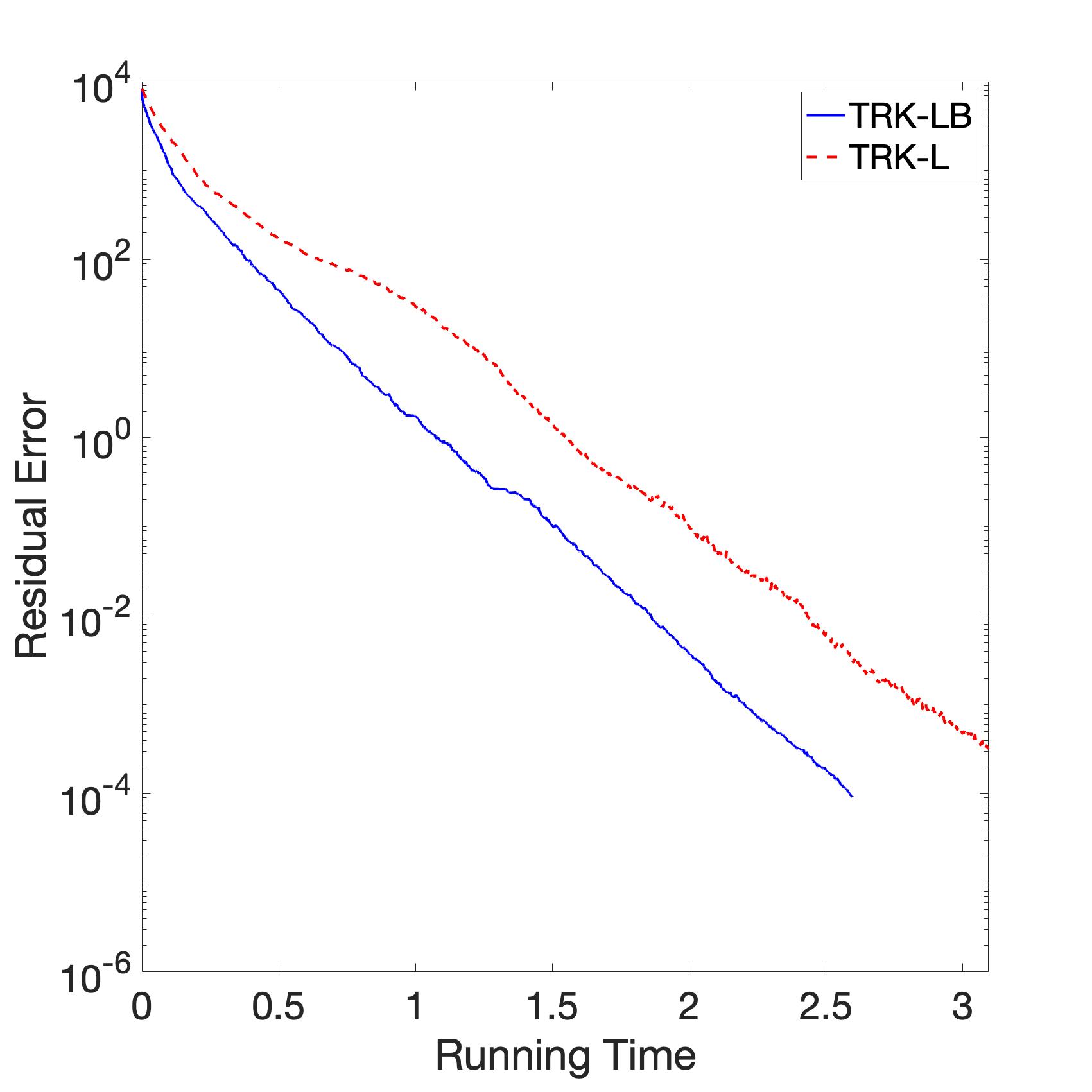}
  \caption{Comparison of TRK-LB and TRK-L for a Gaussian random system with $\A\in\mathbb R^{100\times 50\times 10}$,
  $\B\in\mathbb R^{100\times 7\times 10}$ and $\tilde \B\in\mathbb R^{50\times 7\times 10}$.
  Left:  Iterations versus residual errors of TRK-LB and TRK-L with step coefficient $\alpha=1.8$.
  Right:  Running time versus residual errors of TRK-LB and TRK-L with step coefficient $\alpha=1.8$.}
  \label{fig:tRKLB_vs_tRKL}
\end{figure}

\subsection{Applications in image deblurring}
In this section, we consider the applications of TRK-L and TRK-LB in image deblurring. 
A basic model of image blurring is given by 
$\A\ast\X=\B$,
where $\A$ denotes the blurring operator and $\B$ represents the blurry (and possibly noisy) image \cite{kilmer2013third}.
We consider two settings of image deblurring, one where the input image is the blurry image and we solve the linear system $\A\ast\X=\B$, and one where only a noisy observation $\tilde\B$ is available. 
In the first setting, the problem can be formulated as a problem of the form 
\[
\A\ast\X=\B, ~\X\ge 0,
\]
which can be solved using the TRK-LB method in Algorithm~\ref{alg:trknc}.
In the second setting, we consider a noisy observation  $\tilde\B$ with a bounded noise, and the deblurring problem is formulated as solving for $\X$ satisfying
\[
\B-\epsilon\le\A\ast\X \le \B+\epsilon, ~\X \ge 0,\\
\]
for a small $\epsilon>0$ depending on the noise level. The TRK-L method can be applied in this context. 

The experiments were performed on the 3D MRI image dataset \texttt{mri} in Matlab,
which consists of $12$ frames of size $128\times 128$ from an MRI data scan of a human cranium. The blurry images are generated by convolving the original images with a Gaussian convolution kernel of size $5\times 5$ with standard deviation 2. For the noisy setting, the noise tolerance $\epsilon$ was set as $0.2\approx 0.01\cdot\textrm{mean}(\B)$. We obtain a noisy observation $\tilde \B$ by perturbing the exact observation with a bounded noise:
\[
\tilde\B = \B+\N,
\]
where entries of the noise $\N$ are drawn independently from a uniform distribution on $[-0.2,0.2]$.
The step size is chosen to be \[
t_i= 2\normF{\A_{i::}}^2/(\max_{1\le j\le n}{\normF{\fft{\A_{i::}}_j}^2}),
\]
for each $i=1,\cdots,m.$

Figure \ref{fig:mri} displays the reconstructed images in both settings for various choices of initializations $\X^0$. Figure \ref{fig:reserror} 
showcases the residual error $\|\A\X^{(k)}-\B\|_F^2$ 
as a function of the iteration. 

\begin{figure}[htbp]
  \centering
  \includegraphics[width=0.4\textwidth]{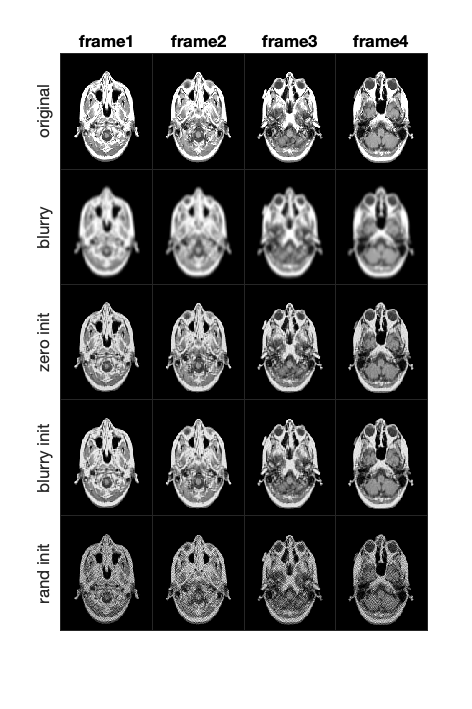}
  \includegraphics[width=0.4\textwidth]{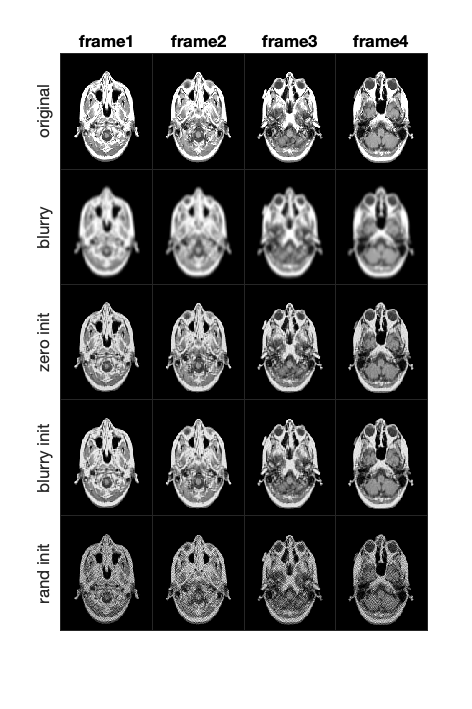}
  \caption{Results of image deblurring with exact observation (left) and noisy observation (right). Each column is a different frame of the image. The rows from top to bottom denote the original image, the blurry input image, and the reconstruction from a zero tensor initialization, the blurry image initialization, and a random  initialization generated from a Gaussian distribution with standard deviation 88. The noise tolerance $\epsilon$ was set as $0.2$.}
  \label{fig:mri}
\end{figure}

\begin{figure}[htbp]
  \centering
  \includegraphics[width=0.4\textwidth]{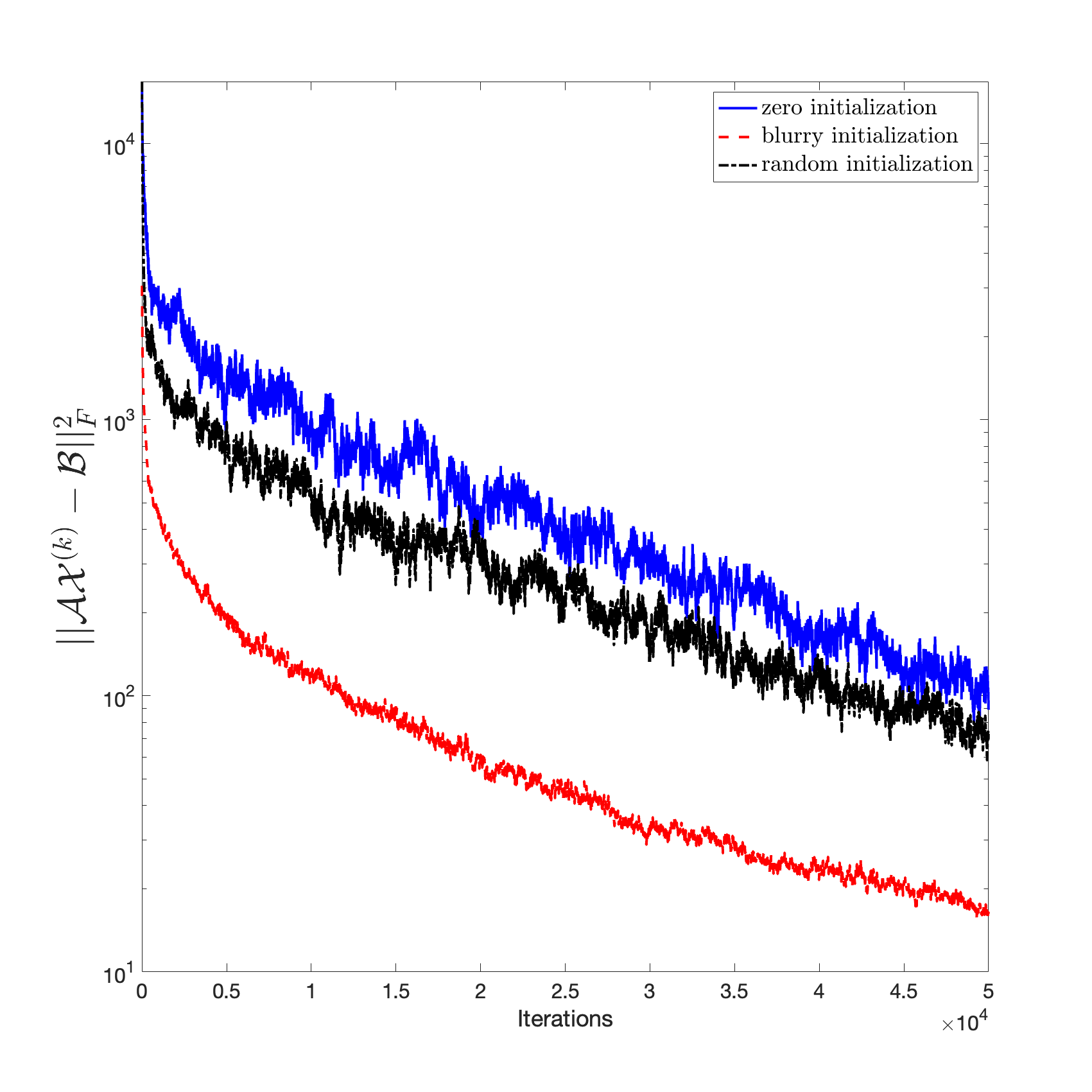}
  \hspace{2pt}
  \includegraphics[width=0.4\textwidth]{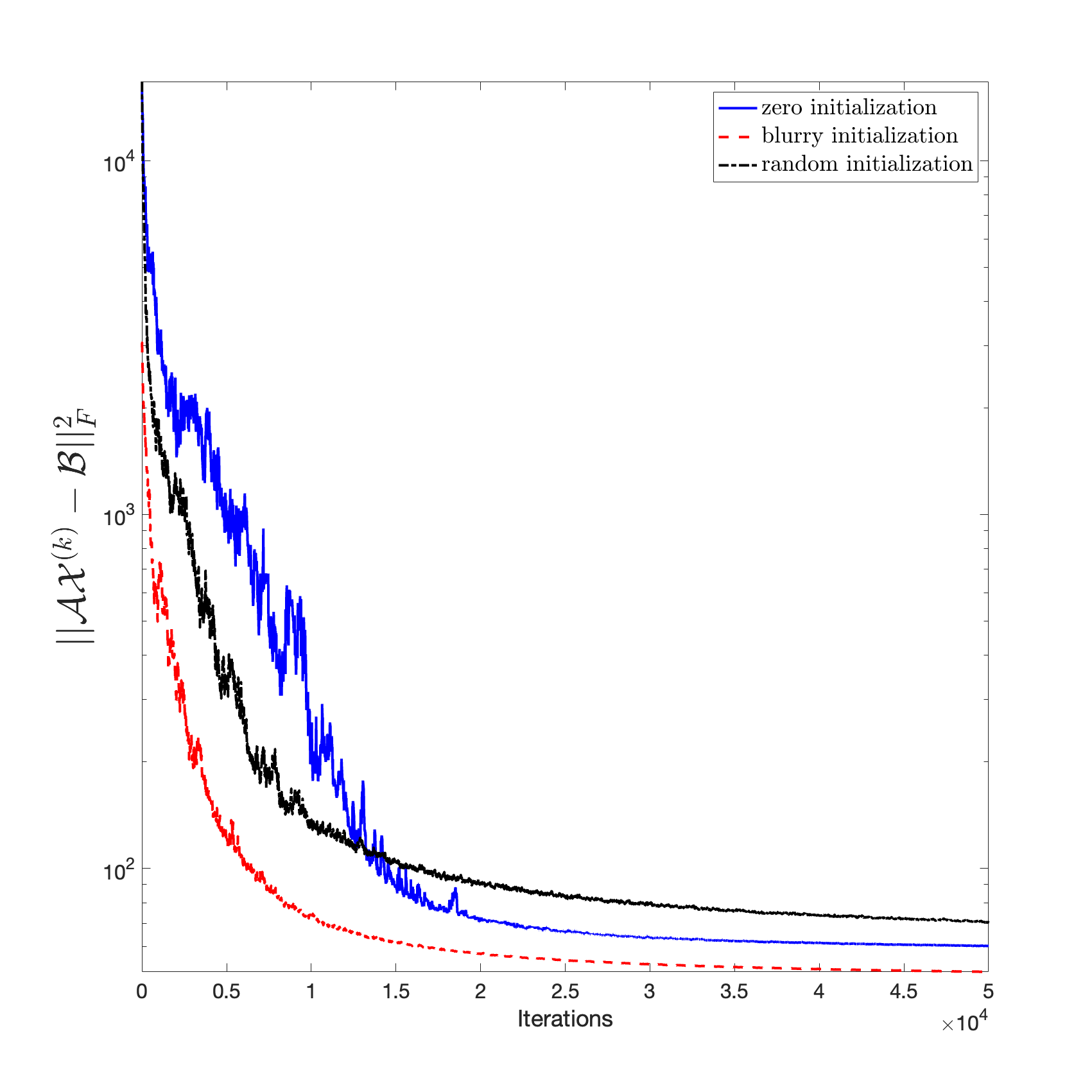}
  \caption{Residual errors of image deblurring with exact observation (left) and noisy observation (right). The iterates were derived from a zero
  tensor initialization (blue), the blurry image initialization (red), and a random initialization (black) generated from a Gaussian distribution with standard deviation 88. The noise tolerance $\epsilon$ was set as $0.2$.}
  \label{fig:reserror}
\end{figure}

 \section{Conclusions}\label{sec:conclude}
In this work, we proposed a block variant of RK method, B-MRK, for linear feasibility problems defined by matrices. We also proposed a TRK-L
method and a TRK-LB method for tensor linear feasibility problems defined under the t-produce. The B-MRK method and the TRK-L method solve general 
problems that involve mixed equality and inequality constraints, and the TRK-LB method is specifically tailored for 
problems that involve only equality constraints and bound constraints on variables. We show that all these three methods converge linearly in expection 
to the feasible region. Connections between TRK-L and B-MRK are made both by analysis and by numerical experiments.
The effectiveness of the three methods is demonstrated through numerical experiments
on a variety of Gaussian random systems and applications in image deblurring.

\section*{Declarations}

\noindent\textbf{Funding}~
JH was partially supported by NSF DMS \#2211318. DN was partially supported by NSF DMS \#2011140.

\noindent\textbf{Conflict of interest}~
On behalf of all authors, the corresponding author states that there is no conflict of interest. 

\bibliography{ref}
\end{document}